\newcommand{\Rms}{Rees matrix \sgp}
\newcommand{\Rmss}{Rees matrix \sgps}
\newcommand{\sgp}{semi\-group}
\newcommand{\sgps}{semi\-groups}
\DeclareMathOperator{\al}{alph}
\DeclareMathOperator{\occ}{occ}
\newcommand{\wire}[2]{\begin{picture}(13,3)\gasset{AHnb=0,linewidth=0.3}\drawline(3,1)(8,1)\put(0,0){$#1$}\put(9,0){$#2$}\end{picture}}
\newcommand{\wirei}[2]{\begin{picture}(13,3)\gasset{AHnb=0,linewidth=0.3}\drawline(4,1)(8,1)\put(0,0){$#1$}\put(9,0){$#2$}\end{picture}}
\newcommand{\wireii}[2]{\begin{picture}(16,3)\gasset{AHnb=0,linewidth=0.3}\drawline(4,1)(8,1)\put(0,0){$#1$}\put(9,0){$#2$}\end{picture}}
\newcommand{\wires}[2]{\begin{picture}(12,3)\gasset{AHnb=0,linewidth=0.3}\drawline(3,1)(8,1)\put(0,0){$#1$}\put(9,0){$#2$}\end{picture}}
\title{Identities of the Kauffman Monoid $\mathcal{K}_4$\\ and of the Jones monoid $\mathcal{J}_4$}
\titlerunning{Identities of $\mathcal{K}_4$ and $\mathcal{J}_4$}
\author{N. V. Kitov \and M. V. Volkov\thanks{Supported by the Ministry of Science and Higher Education of the Russian Federation, project no.\ 1.580.2016, and the Competitiveness Enhancement Program of Ural Federal University.}}
\authorrunning{N. V. Kitov, M. V. Volkov}
\institute{Institute of Natural Sciences and Mathematics\\ Ural Federal University, Lenina 51, 620000 Ekaterinburg, Russia\\
\email{n.v.kitov@urfu.ru, m.v.volkov@urfu.ru}}
\begin{document}

\maketitle

\begin{abstract}
Kauffman monoids $\mathcal{K}_n$ and Jones monoids $\mathcal{J}_n$, $n=2,3,\dots$, are two families of monoids relevant in knot theory. We prove a somewhat counterintuitive result that the Kauffman monoids $\mathcal{K}_3$ and $\mathcal{K}_4$ satisfy exactly the same identities. This leads to a polynomial time algorithm to check whether a given identity holds in $\mathcal{K}_4$. As a byproduct, we also find a polynomial time algorithm for checking identities in the Jones monoid $\mathcal{J}_4$.
\end{abstract}

\section{Background I: Identities and identity checking}

The present paper deals with the computational complexity of a combinatorial decision problem (identity checking problem) related to certain algebraic structures originated in knot theory (Kauffman and Jones monoids). Since our results and their proofs involve concepts from several different areas, the list of necessary prerequisites is relatively long. We assume the reader's familiarity with basic notions of computational complexity and semigroup theory; see, e.g., the early chapters of \citep{Pa94} and \citep{CP61}, respectively. Modulo these basics, we tried to make the paper self-contained, to a reasonable extent. In particular, in this section we give a quick introduction into semigroup identities and their checking while the next section provides detailed geometric definitions of Kauffman and Jones monoids.

We fix a countably infinite set $X$ which we call an \emph{alphabet} and which elements we refer to as \emph{letters}. The set $X^+$ of finite sequences of letters forms a semigroup under concatenation which is called the \emph{free semigroup over $X$}. Elements of $X^+$ are called \emph{words over $X$}. If $w=x_1\cdots x_\ell$ with $x_1,\dots,x_\ell\in X$ is a word over $X$, the set $\{x_1,\dots,x_\ell\}$ is called the \emph{content} of $w$ and is denoted $\al(w)$ while the number $\ell$ is referred to as the \emph{length} of $w$ and is denoted $|w|$. We say that a letter $x\in X$ \emph{occurs} in a word $w\in X^+$ or, alternatively, $w$ \emph{involves} $x$ whenever $x\in\al(w)$.

An \emph{identity} is an expression of the form $w\bumpeq w'$ with $w,w'\in X^+$. If $\mathcal{S}$ is a semigroup, we say that the identity $w\bumpeq w'$ \emph{holds} in $\mathcal{S}$ or, alternatively, $\mathcal{S}$ \emph{satisfies} $w\bumpeq w'$ if $w\varphi=w'\varphi$ for every homomorphism $\varphi\colon X^+\to\mathcal{S}$. If $w\bumpeq w'$ does not holds in $\mathcal{S}$, we say that it \emph{fails} in $\mathcal{S}$.

The following observations are immediate: if a semigroup $\mathcal{S}$ satisfies an identity $w\bumpeq w'$, so do each subsemigroup and each quotient of $\mathcal{S}$; if semigroups $\mathcal{S}_1$  and $\mathcal{S}_2$ satisfy $w\bumpeq w'$, so does their direct product $\mathcal{S}_1\times\mathcal{S}_2$.

It is well known and easy to see that the free semigroup $X^+$ possesses the following universal property: for every semigroup $\mathcal{S}$, every mapping $X\to\mathcal{S}$ uniquely extends to a homomorphism $X^+\to\mathcal{S}$. Thus, the homomorphisms $X^+\to\mathcal{S}$  are in a 1-1 correspondence with the mappings $X\to\mathcal{S}$, which we call \emph{substitutions}. Therefore we can restate the fact of $w\bumpeq w'$ holding in $\mathcal{S}$ also in the following terms: every substitution of elements in $\mathcal{S}$ for letters in $X$ yields equal values to $w$ and $w'$.

Given a semigroup $\mathcal{S}$, its \emph{identity checking problem}\footnote{Also called the `\emph{term equivalence problem}' in the literature.} \textsc{Check-Id}($\mathcal{S}$) is the following decision problem. The instance of \textsc{Check-Id}($\mathcal{S}$) is an arbitrary identity $w\bumpeq w'$. The answer to the instance $w\bumpeq w'$ is `YES' whenever the identity $w\bumpeq w'$ holds in $\mathcal{S}$; otherwise, the answer is `NO'.

We stress that here $\mathcal{S}$ is fixed and it is the identity $w\bumpeq w'$ that serves as the input so that the time/space complexity of \textsc{Check-Id}($\mathcal{S}$) should be measured in terms of the size of the identity, that is, in $|ww'|$.

Studying computational complexity of identity checking in semigroups (and other `classical' algebras such as groups and rings) was proposed by Sapir in the influential survey \citep{KS95}, see Problem~2.4 therein. For a \textbf{finite} semigroup $\mathcal{S}$, the problem \textsc{Check-Id}($\mathcal{S}$) is always decidable. Indeed, given an identity $w\bumpeq w'$, there are only finitely many substitutions of elements in $\mathcal{S}$ for letters in $\al(ww')$, and one can check whether or not each of these substitutions yields equal values to $w$ and $w'$. Moreover, \textsc{Check-Id}($\mathcal{S}$) with $\mathcal{S}$ being finite belongs to the complexity class $\mathsf{coNP}$: if for some words $w,w'$ that involve $m$ letters in total, the identity $w\bumpeq w'$ fails in the semigroup $\mathcal{S}$, then a nondeterministic algorithm can guess an $m$-tuple of elements in $\mathcal{S}$ witnessing the failure and then verify the guess by computing the values of the words $w$ and $w'$ under the substitution that sends the letters occurring in $w\bumpeq w'$ to the entries of the guessed $m$-tuple. With multiplication in $\mathcal{S}$ assumed to be performed in unit time, the algorithm takes linear in $|ww'|$ time.

In the literature, there exists many examples of finite semigroups whose identity checking problem is $\mathsf{coNP}$-complete; see, e.g., \citep{AVG09,HLMS,JM06,Ki04,Kl09,Kl12,PV06,Se05,SS06} and the references therein. However, the task of classifying finite semigroups according to the computational complexity of identity checking appears to be far from being feasible. In particular, it is not yet accomplished even in the case when a semigroup under consideration is a finite group. Just to give a hint of difficulties that one encounters when approaching this task, we mention the following result by \cite{Kl09}: a finite semigroup $\mathcal{S}$ with \textsc{Check-Id}($\mathcal{S}$) in $\mathsf{P}$ may have both a subsemigroup and a quotient whose identity checking problems are $\mathsf{coNP}$-complete.

Studying the identity checking problem for \textbf{infinite} semigroups cannot rely on the `finite' methods outlined above. Clearly, the brute-force approach of checking through all possible substitutions fails since the set of such substitutions becomes infinite if their range is an infinite semigroup. The nondeterministic guessing algorithm also fails in general because an infinite semigroup $\mathcal{S}$ may have undecidable word problem
so that it might be impossible to decide whether or not the values of two words under a substitution are equal in $\mathcal{S}$. \citet{Mu68} had constructed an infinite semigroup $\mathcal{S}$ such that the problem \textsc{Check-Id}($\mathcal{S}$) is undecidable. On the other hand, for many `natural' infinite semigroups such as \sgps\ of transformations of an infinite set, or \sgps\ of relations on an infinite domain, or \sgps\ of matrices over an infinite ring, the identity checking problem trivializes since such `big' \sgps\ satisfy only \emph{trivial} identities, that is, identities of the form $w\bumpeq w$. Yet another class of `natural' infinite semigroups with easy identity checking is formed  by various commutative structures in arithmetics and algebra such as integer numbers or real polynomials, say, under addition or multiplication. It is folklore that these commutative semigroups satisfy exactly so-called balanced identities. (An identity $w\bumpeq w'$ is said to be \emph{balanced} if every letter occurs in $w$ and $w'$ the same number of times.  Clearly, this condition can be verified in linear in $|ww'|$ time.)

For a long time, there were no results on the computational complexity of identity checking for infinite semigroups, except for the two aforementioned extremes---undecidability and trivial or easy decidability in linear time.
Only recently, the situation has started to change, and a few examples of infinite semigroups with identity checking decidable in a nontrivial way have appeared. An interesting instance here is the so-called bicyclic monoid $\mathcal{B}$ generated by two elements $a$ and $b$ subject to the relation $ba=1$; this monoid is known to play a distinguished role in the structure theory of semigroups. The fact that $\mathcal{B}$ satisfies a nontrivial identity was first discovered by \cite{Adian:1962}. After that, various combinatorial, computational, and geometric aspects of identities holding in $\mathcal{B}$ were examined in the literature, see, e.g., \citep{Sh89,Shl90,Pa06}, but only short while ago \cite{DJK18} have shown that checking identities in $\mathcal{B}$ can be done in polynomial time via quite a tricky algorithm based on linear programming. Another example is the Kauffman monoid $\mathcal{K}_3$ generated by three elements $h_1$, $h_2$, and $c$ subject to the relations $h_ih_{3-i}h_i=h_i$ and $h_i^2=ch_i=h_ic$,  $i=1,2$; a recent paper by \cite{Chen20} provides an algorithm for checking identities in $\mathcal{K}_3$ in quasilinear time. The main result of the present paper extends this algorithm to the Kauffman monoid $\mathcal{K}_4$, which we define next.

\section{Background II: Kauffman and Jones monoids}
\label{sec:k&j}

Let $n$ be an integer greater than 1. The \emph{Kauffman monoid}\footnote{The name comes from \citep{BDP02}; in the literature one also meets the name \emph{Temperley--Lieb--Kauffman monoids} \citep[see, e.g.,][]{BL05}.}  $\mathcal{K}_n$ can be defined as the monoid with $n$ generators $c,h_1,\dots,h_{n-1}$ subject to the following relations:
\begin{align}
&h_{i}h_{j}=h_{j}h_{i}    &&\text{if } |i-j|\ge 2,\ i,j=1,\dots,n-1;\label{eq:TL1}\\
&h_{i}h_{j}h_{i}=h_{i}    &&\text{if } |i-j|=1,\ i,j=1,\dots,n-1;\label{eq:TL2}\\
&h_{i}^2=ch_{i}=h_{i}c    &&\text{for each } i=1,\dots,n-1.\label{eq:TL3}
\end{align}
Kauffman monoids play an important role in knot theory, low-dimensional topology, topological quantum field theory, quantum groups, etc. As algebraic objects, these monoids belong to the family of so-called diagram or Brauer-type monoids that originally arose in representation theory \citep{Br37} and have been intensively studied from various viewpoints over the last two decades; see, e.g., \citep{Au12,Au14,ADV12,ACHLV15,DE17,DE18,Dea15,DEG17,Dea19,Ea11a,Ea11b,Ea14a,Ea14b,Ea18,Ea19a,Ea19b,EF12,EG17,Eea18,FL11,KMM06,KM06,KM07,LF06,MM07,Ma98,Ma02} and references therein.

It is convenient to use, along with the above definition of the monoids $\mathcal{K}_n$ in terms of generators and relations, their more geometric definition due to \citet{Ka90}. We present the latter definition, following~\citep{ACHLV15}, where the monoids $\mathcal{K}_n$ arise as `planar' submonoids in monoids from a more general (but easier to define) family.

Let $[n]:=\{1,\dots,n\}$, $[n]':=\{1',\dots,n'\}$ be two disjoint copies of the set of the first $n$ positive integers. Consider the set $\mathcal{W}_n$ of all pairs $(\pi;s)$ where $\pi$ is a partition of the $2n$-element set $[n]\cup [n]'$ into 2-element blocks and $s$ is a nonnegative integer referred to as the \emph{number of circles}. Such a pair is represented by a \emph{wire diagram} as shown in Fig.~\ref{fig:diagram}.
\begin{figure}[ht]
\centering
\unitlength=.7mm
\begin{picture}(45,65)(0,15)
\multiput(4,17)(0,8){9}{$\bullet$}
\multiput(39,17)(0,8){9}{$\bullet$}
\gasset{AHnb=0,linewidth=.5}
\drawline(5,18)(40,50)
\drawline(5,58)(40,82)
\drawline(5,74)(40,74)
\drawcurve(5,26)(15,33)(5,42)
\drawcurve(5,34)(15,42)(5,50)
\drawcurve(5,66)(15,74)(5,82)
\drawcurve(40,18)(35,22)(40,26)
\drawcurve(40,34)(35,38)(40,42)
\drawcurve(40,58)(35,62)(40,66)
\put(0,17){$\scriptstyle 1$}
\put(0,25){$\scriptstyle 2$}
\put(0,33){$\scriptstyle 3$}
\put(0,41){$\scriptstyle 4$}
\put(0,49){$\scriptstyle 5$}
\put(0,57){$\scriptstyle 6$}
\put(0,65){$\scriptstyle 7$}
\put(0,73){$\scriptstyle 8$}
\put(0,81){$\scriptstyle 9$}
\put(43,17){$\scriptstyle 1'$}
\put(43,25){$\scriptstyle 2'$}
\put(43,33){$\scriptstyle 3'$}
\put(43,41){$\scriptstyle 4'$}
\put(43,49){$\scriptstyle 5'$}
\put(43,57){$\scriptstyle 6'$}
\put(43,65){$\scriptstyle 7'$}
\put(43,73){$\scriptstyle 8'$}
\put(43,81){$\scriptstyle 9'$}
\drawcircle(22.5,50,3)
\drawcircle(22.5,50,6)
\drawcircle(22.5,50,9)
\end{picture}
\caption{Wire diagram representing an element of $\mathcal{W}_9$}\label{fig:diagram}
\end{figure}
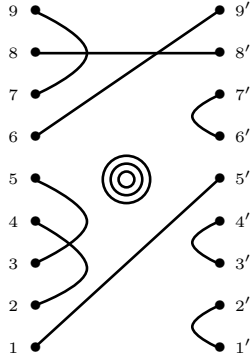
We represent the elements of $[n]$ by points on the left hand side of the diagram (\emph{left points}) while the elements of $[n]'$ are represented by points on the right hand side of the diagram (\emph{right points}). We will omit the labels $1,2,\dots,1',2',\dots$ in our further illustrations. Now, for $(\pi;s)\in \mathcal{W}_n$, we represent the number $s$ by $s$ closed curves (`circles') drawn somewhere within the diagram and each block of the partition $\pi$ is represented by a line referred to as a \emph{wire}. Thus, each wire connects two points; it is called an $\ell$-\emph{wire} if it connects two left points, an $r$-\emph{wire} if it connects two right points,
and a $t$-\emph{wire} if it connects a left point with a right point. The wire diagram in Fig.~\ref{fig:diagram} has three wires of each type and corresponds to the pair
$$\Bigl(\bigr\{\{1,5'\},\{2,4\},\{3,5\},\{6,9'\},\{7,9\},\{8,8'\},\{1',2'\},\{3',4'\},\{6',7'\}\bigr\};\,3\Bigr).$$

Now we define a multiplication in $\mathcal{W}_n$. Pictorially, in order to multiply two diagrams, we glue their wires together by identifying each right point $u'$ of the first diagram with the corresponding left point $u$ of the second diagram. This way we obtain a new diagram whose left (respectively, right) points are the left (respectively, right) points of the first (respectively, second) diagram. Two points of this new diagram are connected in it if one can reach one of them from the other by walking along a sequence of consecutive wires of the factors, see Fig.~\ref{fig:multiplication}. All circles of the factors are inherited by the product; in addition, some extra circles may arise from $r$-wires of the first diagram combined with $\ell$-wires of the second diagram.

In more precise terms, if $\xi=(\pi_1;s_1)$, $\eta=(\pi_2;s_2)$, then a left point $p$ and a right point $q'$ of the product $\xi\eta$ are connected by a $t$-wire if and only if one of the following conditions holds:
\begin{trivlist}
\item[$\bullet$] \wire{p}{u'} is a $t$-wire in $\xi$ and \wire{u}{q'} is a $t$-wire in $\eta$ for some $u\in[n]$;
\item[$\bullet$] for some $s>1$ and some $u_1,v_1,u_2,\dots,v_{s-1},u_s\in[n]$ (all pairwise distinct), \wire{p}{u_1'} is a $t$-wire in $\xi$ and \wirei{u_s}{q'} is a $t$-wire in $\eta$, while \wirei{u_i}{v_i} is an $\ell$-wire in $\eta$ and \wireii{v_i'}{u_{i+1}'} is an $r$-wire in $\xi$ for each $i=1,\dots,s-1$.\\
(The reader may trace an application of the second rule in Fig.~\ref{fig:multiplication}, in which such a `composite' $t$-wire connects 1 and $3'$ in the product diagram.)
\end{trivlist}
\begin{figure}[ht]
\centering
\unitlength=.67mm
\begin{picture}(170,65)(0,15)
\gasset{AHnb=0,linewidth=.5}
\multiput(4,17)(0,8){9}{$\bullet$}
\multiput(39,17)(0,8){9}{$\bullet$}
\drawline(5,18)(40,50)
\drawline(5,58)(40,82)
\drawline(5,74)(40,74)
\drawcurve(5,26)(15,33)(5,42)
\drawcurve(5,34)(15,42)(5,50)
\drawcurve(5,66)(15,74)(5,82)
\drawcurve(40,18)(35,22)(40,26)
\drawcurve(40,34)(35,38)(40,42)
\drawcurve(40,58)(35,62)(40,66)
\drawcircle(22.5,50,3)
\drawcircle(22.5,50,6)
\drawcircle(22.5,50,9)
\put(43.5,48){$\times$}
\put(46,0){\begin{picture}(50,80)
\gasset{AHnb=0,linewidth=.5,linecolor=red}
\multiput(4,17)(0,8){9}{$\bullet$}
\multiput(39,17)(0,8){9}{$\bullet$}
\drawline(5,66)(40,34)
\drawcurve(5,18)(10,22)(5,26)
\drawcurve(5,34)(15,46)(5,58)
\drawcurve(5,42)(10,46)(5,50)
\drawcurve(5,74)(10,78)(5,82)
\drawcurve(40,18)(35,22)(40,26)
\drawcurve(40,50)(35,54)(40,58)
\drawcurve(40,66)(35,70)(40,74)
\drawcurve(40,42)(30,62)(40,82)
\drawcircle(22.5,35,4)
\end{picture}}
\put(90.5,48){=}
\put(94,0){\begin{picture}(80,80)
\multiput(4,17)(0,8){9}{$\bullet$}
\multiput(74,17)(0,8){9}{$\bullet$}
\drawline(5,18)(40,50)
\drawline(5,58)(40,82)
\drawline(5,74)(40,74)
\drawcurve(5,26)(15,33)(5,42)
\drawcurve(5,34)(15,42)(5,50)
\drawcurve(5,66)(15,74)(5,82)
\drawcurve(40,18)(35,22)(40,26)
\drawcurve(40,34)(35,38)(40,42)
\drawcurve(40,58)(35,62)(40,66)
\drawcircle(22.5,50,3)
\drawcircle(22.5,50,6)
\drawcircle(22.5,50,9)
\drawline[linewidth=.3,dash={2.05}0](40,12)(40,88)
\put(35,0){\begin{picture}(40,80)
\gasset{AHnb=0,linewidth=.5,linecolor=red}
\drawline(5,66)(40,34)
\drawcurve(5,18)(10,22)(5,26)
\drawcurve(5,34)(15,46)(5,58)
\drawcurve(5,42)(10,46)(5,50)
\drawcurve(5,74)(10,78)(5,82)
\drawcurve(40,18)(35,22)(40,26)
\drawcurve(40,50)(35,54)(40,58)
\drawcurve(40,66)(35,70)(40,74)
\drawcurve(40,42)(30,62)(40,82)
\drawcircle(22.5,35,4)
\end{picture}}
\end{picture}}
\end{picture}
\caption{Multiplication of wire diagrams}\label{fig:multiplication}
\end{figure}
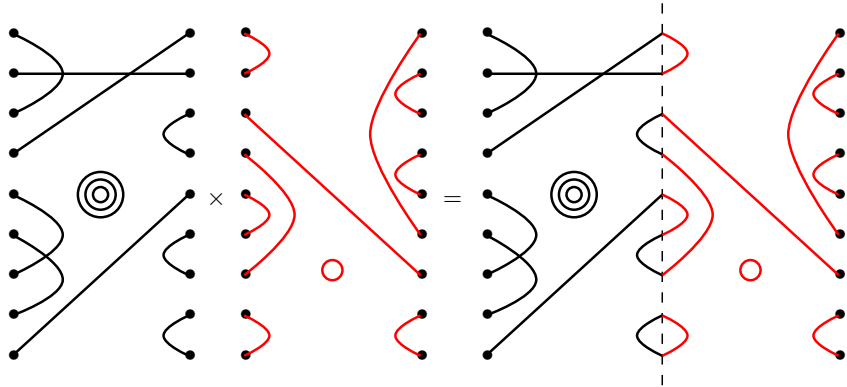

Analogous characterizations hold for the $\ell$-wires and $r$-wires of $\xi\eta$. Here we include only the rules for forming $\ell$-wires as the $r$-wires of the product are obtained in a perfectly symmetric way.

Two left points $p$ and $q$ of $\xi\eta$ are connected by an $\ell$-wire if and only if one of the following conditions holds:
\begin{trivlist}
\item[$\bullet$] \wires{p}{q} is an $\ell$-wire in $\xi$;
\item[$\bullet$] for some $s\ge1$ and some $u_1,v_1,u_2,\dots,v_s\in[n]$ (all pairwise distinct), \wire{p}{u_1'} and \wire{q}{v_s'} are $t$-wires in $\xi$, while \wirei{u_i}{v_i} is an $\ell$-wire in $\eta$ for each $i=1,\dots,s$ and if $s>1$, then \wireii{v_i'}{u_{i+1}'} is an $r$-wire in $\xi$ for each $i=1,\dots,s-1$.\\
(Again, Fig.~\ref{fig:multiplication} provides an instance of the second rule: look at the $\ell$-wire that connects 6 and 8 in the product diagram.)
\end{trivlist}

Finally, each circle of the product $\xi\eta$ corresponds to either a circle in $\xi$ or $\eta$ or a sequence $u_1,v_1,\dots,u_s,v_s\in[n]$ with $s\ge 1$ and pairwise distinct $u_1,v_1,\dots,u_s,v_s$ such that all \wirei{u_i}{v_i} are $\ell$-wires in $\eta$, while all \wireii{v_i'}{u_{i+1}'} and \wirei{v_s'}{u_1'} are $r$-wires in $\xi$.

It easy to see that the above defined multiplication in $\mathcal{W}_n$ is associative and that the diagram with 0 circles and the $n$ horizontal $t$-wires \wires{1}{1'}, \dots, \wire{n}{n'} is the identity element with respect to the multiplication. Thus, $\mathcal{W}_n$ is a monoid that we term the \emph{wire monoid}.

\citet{Ka90} has defined the \emph{connection monoid} $\mathcal{C}_n$ as the submonoid of $\mathcal{W}_n$ consisting of all elements of $\mathcal{W}_n$ that have a representation as a diagram whose wires do not cross. (Thus, the left factor and the product in the multiplication example in Fig.~\ref{fig:multiplication} are not elements of $\mathcal{C}_n$, while the right factor lies in $\mathcal{C}_n$.) Kauffman has shown that $\mathcal{C}_n$ is generated by the \emph{hooks} $h_1,\dots,h_{n-1}$, where
$$h_i:=\Bigl(\bigr\{\{i,i+1\},\{i',(i+1)'\},\{j,j'\}\mid \text{for all } j\ne i,i+1\bigr\};\,0\Bigr),$$
and the circle $c:=\Bigl(\bigr\{\{j,j'\}\mid \text{for all } j=1,\dots,n\bigr\};\,1\Bigr),$ see Fig.~\ref{fig:hooks} for an illustration.
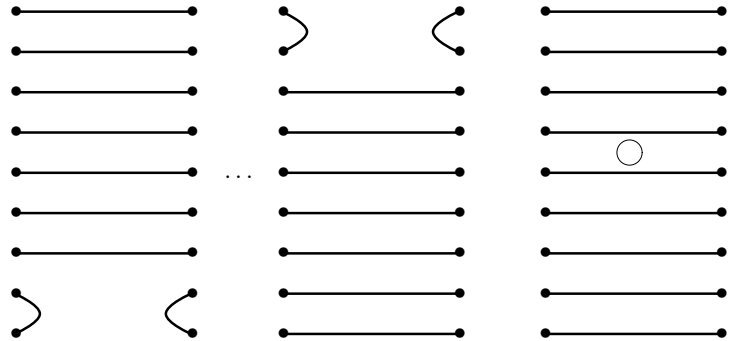
\begin{figure}[ht]
\centering \unitlength=.67mm
\begin{picture}(150,65)(0,15)
\gasset{AHnb=0,linewidth=.5}
\multiput(4,17)(0,8){9}{$\bullet$}
\multiput(39,17)(0,8){9}{$\bullet$}
\drawcurve(5,18)(10,22)(5,26)
\drawline(5,34)(40,34)
\drawline(5,42)(40,42)
\drawline(5,50)(40,50)
\drawline(5,58)(40,58)
\drawline(5,66)(40,66)
\drawline(5,74)(40,74)
\drawline(5,82)(40,82)
\drawcurve(40,18)(35,22)(40,26)
\put(46.5,49){$\dots$}
\put(53,0){\begin{picture}(50,80)
\multiput(4,17)(0,8){9}{$\bullet$}
\multiput(39,17)(0,8){9}{$\bullet$}
\drawline(5,18)(40,18)
\drawline(5,26)(40,26)
\drawline(5,34)(40,34)
\drawline(5,42)(40,42)
\drawline(5,50)(40,50)
\drawline(5,58)(40,58)
\drawline(5,66)(40,66)
\drawcurve(5,74)(10,78)(5,82)
\drawcurve(40,74)(35,78)(40,82)
\end{picture}}
\put(105,0){\begin{picture}(50,80)
\multiput(4,17)(0,8){9}{$\bullet$}
\multiput(39,17)(0,8){9}{$\bullet$}
\drawline(5,18)(40,18)
\drawline(5,26)(40,26)
\drawline(5,34)(40,34)
\drawline(5,42)(40,42)
\drawline(5,50)(40,50)
\drawline(5,58)(40,58)
\drawline(5,66)(40,66)
\drawline(5,74)(40,74)
\drawline(40,82)(5,82)
\put(22,54){\circle{5}}
\end{picture}}
\end{picture}
\caption{The hooks $h_1,\dots,h_8$ and the circle $c$ in $\mathcal{C}_9$}\label{fig:hooks}
\end{figure}
It is easy to check that the generators $h_1,\dots,h_{n-1},c$ satisfy the relations \eqref{eq:TL1}--\eqref{eq:TL3}, whence there exists a homomorphism from the Kauffman monoid $\mathcal{K}_n$ onto the connection monoid $\mathcal{C}_n$. In fact, this homomorphism is an isomorphism between $\mathcal{K}_n$ and $\mathcal{C}_n$; see \citep{Ka90} for a proof outline and \citep{BDP02} for a very detailed argument. Thus, we may (and will)
identify $\mathcal{K}_n$ with $\mathcal{C}_n$ in what follows.

Denote by $\mathcal{J}_n$ the set of all diagrams in $\mathcal{K}_n$ without circles. Observe that this set is finite; in fact, it is known that the cardinality of $\mathcal{J}_n$ is the $n$-th Catalan number $\dfrac{1}{n+1}\dbinom{2n}{n}$. We define the multiplication of two diagrams in $\mathcal{J}_n$ as follows: we multiply the diagrams as elements of $\mathcal{K}_n$ and then reduce the product to a diagram in $\mathcal{J}_n$ by removing all circles. This multiplication makes $\mathcal{J}_n$ a monoid known as the \emph{Jones monoid}\footnote{The name was suggested by \citet{LF06} to honor the contribution of V.F.R.~Jones to the theory
\citep[see, e.g.,][Section~4]{Jo83}.}. Observe that $\mathcal{J}_n$ is \textbf{not} a submonoid of $\mathcal{K}_n$; at the same time, the `erasing' map $\xi\mapsto\bar{\xi}$ that forgets the circles of each diagram $\xi\in\mathcal{K}_n$ is easily seen to be a surjective homomorphism of $\mathcal{K}_n$ onto $\mathcal{J}_n$. The hooks $h_1,\dots,h_{n-1}$ clearly satisfy $\bar{h}_i=h_i$ while $\bar{c}$ is the identity element of $\mathcal{J}_n$. This implies that the monoid $\mathcal{J}_n$ is generated by $\bar{h}_1,\dots,\bar{h}_{n-1}$ and that $\bar{h}_{i}^2=\bar{h}_{i}$ for each $i=1,\dots,n-1$. Moreover, if $\|\xi\|$ stands for the number of circles of the diagram $\xi\in\mathcal{K}_n$, then the map $\xi\mapsto\bigl(\bar{\xi},\|\xi\|\bigr)$ is a bijection between $\mathcal{K}_n$ and the cartesian product of $\mathcal{J}_n$ with the set $\mathbb{N}_0$ of nonnegative integers. Here is a simple formula for multiplying diagrams from $\mathcal{K}_n$ in these `coordinates':
\begin{equation}
\label{eq:coordinates}
\bigl(\bar{\xi},\|\xi\|\bigr)\cdot\bigl(\bar{\eta},\|\eta\|\bigr)=\bigl(\bar{\xi}\bar{\eta},\|\xi\|+\|\eta\|+\langle\bar{\xi},\bar{\eta}\rangle\bigr),
\end{equation}
where the term $\langle\bar{\xi},\bar{\eta}\rangle$ denotes the number of circles removed when the product $\bar{\xi}\bar{\eta}$ in $\mathcal{J}_n$ is formed.

Now, following an idea by Auinger (personal communication), we embed the monoid $\mathcal{K}_n$ into a larger monoid $\widehat{\mathcal{K}}_n$ which is easier to deal with. In terms of generators and relations,
the \emph{extended Kauffman monoid} $\widehat{\mathcal{K}}_n$ can be defined as the monoid with $n+1$ generators $c,d,h_1,\dots,h_{n-1}$ subject to the relations \eqref{eq:TL1}--\eqref{eq:TL3} and the additional relations
\begin{equation}
\label{eq:inverse}
cd=dc=1.
\end{equation}
Observe that the relations \eqref{eq:TL3} and \eqref{eq:inverse} imply that $dh_i=h_id$ for each $i=1,\dots,n-1$. Indeed,
\begin{align*}
dh_i&=d^2ch_i&&\text{since $dc=1$}\\
    &=d^2h_ic&&\text{since $ch_i=h_ic$}\\
    &=d^2h_ic^2d&&\text{since $cd=1$}\\
    &=d^2c^2h_id&&\text{since $c^2h_i=h_ic^2$}\\
    &=h_id&&\text{since $d^2c^2=1$.}
\end{align*}
It is easy to see that the submonoid of $\widehat{\mathcal{K}}_n$ generated by $c,h_1,\dots,h_{n-1}$ is isomorphic to $\mathcal{K}_n$.

The interpretation of the extended Kauffman monoid in terms of diagrams is a bit less natural as it requires introducing two sorts of circles: positive and negative. Each diagram may contain only circles of one sort. When two diagrams are multiplied, the following two rules are obeyed: all newly created circles (which arise when the diagrams are glued together) are positive; in addition, if the product diagram inherits some negative circles from its factors, then pairs of `opposite' circles are consecutively removed until only circles of a single sort (or no circles at all) remain. The Kauffman monoid $\mathcal{K}_n$ is then nothing but the submonoid of all diagrams having only positive circles or no circles at all.

Clearly, the `erasing' homomorphism of $\mathcal{K}_n$ onto $\mathcal{J}_n$ extends to the monoid $\widehat{\mathcal{K}}_n$. If we extend also the circle-counting map $\mathcal{K}_n\to\mathbb{N}_0$ to $\widehat{\mathcal{K}}_n$, letting $\|\xi\|=-n$ for each diagram $\xi$ with $n$ negative circles, we get that $\widehat{\mathcal{K}}_n$ can be identified with $\mathcal{J}_n\times\mathbb{Z}$, the cartesian product of the corresponding Jones monoid with the set of all integers, the multiplication on $\mathcal{J}_n\times\mathbb{Z}$ being defined by the formula \eqref{eq:coordinates}.

\section{Rees matrix semigroups and their identities}
\label{sec:rees}

We briefly recall the Rees matrix construction; see \cite[Chapter~3]{CP61} for details and the explanation of the distinguished role played by this construction in the structure theory of semigroups. Let $\mathcal{G}$ be a group, $0$ a symbol beyond $\mathcal{G}$, and $I,\Lambda$ non-empty sets. Given a $\Lambda\times I$-matrix $P=(p_{\lambda i})$ over $\mathcal{G}\cup\{0\}$, we define a multiplication on the set $(I\times\mathcal{G}\times\Lambda)\cup\{0\}$ by the following rules:
\begin{equation}
\begin{gathered}
\label{eq:rms}
a\cdot 0=0\cdot a:=0\ \text{ for all } a\in (I\times\mathcal{G}\times\Lambda)\cup\{0\},\\
(i,g,\lambda)\cdot(j,h,\mu):=\begin{cases}
(i,gp_{\lambda j}h,\mu)&\ \text{if}\ p_{\lambda j}\ne0,\\
0 &\ \text{if}\ p_{\lambda j}=0.
\end{cases}
\end{gathered}
\end{equation}
The multiplication is easily seen to be associative so that $(I\times\mathcal{G}\times\Lambda)\cup\{0\}$ becomes a semigroup. We denoted it by $\mathcal{M}^0(I,\mathcal{G},\Lambda;P)$ and call the \emph{\Rms\ over $\mathcal{G}$ with the sandwich matrix $P$}. If the matrix $P$ has no zero entries, the set $I\times\mathcal{G}\times\Lambda$ forms a subsemigroup in $\mathcal{M}^0(I,\mathcal{G},\Lambda;P)$. We denote this subsemigroup by $\mathcal{M}(I,\mathcal{G},\Lambda;P)$ and apply the name `\Rms' also to it.

We need a combinatorial characterization of identities holding in every \Rms\ over an abelian group. In order to formulate it, we recall a few definitions.

For a semigroup $\mathcal{S}$, the notation $\mathcal{S}^1$ stands for the least monoid containing $\mathcal{S}$, that is, $\mathcal{S}^1:=\mathcal{S}$ if $\mathcal{S}$ has an identity element and $\mathcal{S}^1:=\mathcal{S}\cup\{1\}$ if $\mathcal{S}$ has no identity element. In the latter case the multiplication in $\mathcal{S}$ is extended to $\mathcal{S}^1$ in a unique way such that the fresh symbol $1$ becomes the identity element in $\mathcal{S}^1$. We adopt the following notational convention: for $s\in\mathcal{S}$, the expression $s^0$ stands for the identity element of $\mathcal{S}^1$.

Recall that we have fixed a countably infinite alphabet $X$. The monoid $X^*:=(X^+)^1$ is called the \emph{free monoid} over $X$. We say that a word $v\in X^+$ \emph{occurs} in a word $w\in X^+$ if $w=u_1vu_2$ for some words $u_1,u_2\in X^*$. Clearly, $v$ may have several occurrences in $w$; we denote the number of occurrences of $v$ in $w$ by $\occ_v(w)$.

\begin{proposition}
\label{prop:rms}
An identity $w\bumpeq w'$ holds in every \Rms\ over an abelian group if and only if the words $w$ and $w'$ satisfy the following three conditions:
\begin{itemize}
\item[\emph{(a)}] the first letter of $w$ is the same as the first letter of $w'$;
\item[\emph{(b)}] the last letter of $w$ is the same as the last letter of $w'$;
\item[\emph{(c)}] for each word $v$ of length $2$, $\occ_v(w)=\occ_v(w')$.
\end{itemize}
\end{proposition}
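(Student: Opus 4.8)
The plan is to evaluate an arbitrary word in an arbitrary abelian-group \Rms\ and read off a closed form for the result. Fix $\mathcal{M}^0(I,\mathcal{G},\Lambda;P)$ with $\mathcal{G}$ abelian and a substitution $\varphi$ sending each letter $x$ to a triple $(i_x,g_x,\lambda_x)$ or to $0$. A straightforward induction on length using \eqref{eq:rms} shows that, for $w=x_1\cdots x_\ell$ with $w\varphi\ne0$, the value $w\varphi$ is the triple with first coordinate $i_{x_1}$, last coordinate $\lambda_{x_\ell}$, and middle coordinate $g_{x_1}p_{\lambda_{x_1}i_{x_2}}g_{x_2}\cdots p_{\lambda_{x_{\ell-1}}i_{x_\ell}}g_{x_\ell}$. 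The crucial observation is that the sandwich factor inserted between two consecutive letters depends only on the pair of \emph{letters} involved: when $x_k=a$ and $x_{k+1}=b$ it equals $p_{\lambda_a i_b}$, irrespective of the position $k$. Because $\mathcal{G}$ is abelian, I may therefore rewrite the middle coordinate as $\prod_{x}g_x^{\occ_x(w)}\cdot\prod_{ab}p_{\lambda_a i_b}^{\occ_{ab}(w)}$, the second product running over all length-$2$ words $ab$. Thus the value of $w$ is governed entirely by its first letter, its last letter, its single-letter multiplicities, and its length-$2$ factor multiplicities.

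For the \emph{sufficiency} direction I assume (a)--(c). A short counting argument shows that (b) and (c) already force equal single-letter multiplicities: summing the equalities of (c) over the second letter expresses $\occ_x(w)$ in terms of length-$2$ counts up to a correction that records whether $x$ is the last letter, and (b) makes this correction agree for $w$ and $w'$; consequently $\occ_x(w)=\occ_x(w')$ for all $x$, so $w$ and $w'$ have the same content and the same set of length-$2$ factors. Feeding (a)--(c) into the closed form, the first coordinates of $w\varphi$ and $w'\varphi$ agree by (a), the last by (b), and the middle coordinates agree because the two rearranged products use identical exponents. Finally I dispose of the degenerate case: in $\mathcal{M}^0$ one has $w\varphi=0$ precisely when some letter of $\al(w)$ is sent to $0$ or some length-$2$ factor $ab$ of $w$ satisfies $p_{\lambda_a i_b}=0$, and since $w,w'$ share both their content and their set of length-$2$ factors, $w\varphi=0$ if and only if $w'\varphi=0$. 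Hence $w\varphi=w'\varphi$ in every case.

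For the \emph{necessity} direction I assume the identity holds in all abelian-group \Rmss\ and build witnesses that detect the failure of each condition. For (a) I take the left-zero semigroup $\mathcal{M}(\al(ww'),\{1\},\{1\};P)$ with all entries of $P$ equal to the group identity and set $\varphi(x):=(x,1,1)$; then $w\varphi$ records exactly the first letter of $w$, so the identity forces the first letters to coincide, and (b) follows by the mirror-image (right-zero) construction. The substantial case is (c): I let $\mathcal{G}$ be the free abelian group on the set of ordered pairs of letters occurring in $ww'$, with basis $\{e_{ab}\}$, set $I=\Lambda=\al(ww')$, define the sandwich matrix by $p_{ab}:=e_{ab}$ (so that no entry is $0$), and put $\varphi(x):=(x,1,x)$. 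By the closed form the middle coordinate of $w\varphi$ is $\sum_{ab}\occ_{ab}(w)\,e_{ab}$, whose $e_{ab}$-coordinate is exactly $\occ_{ab}(w)$; equating middle coordinates in $w\varphi=w'\varphi$ gives $\occ_{ab}(w)=\occ_{ab}(w')$ for every pair, which is (c).

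I expect the main obstacle to be the necessity of (c), where a single \Rms\ must record the multiplicities of \emph{all} length-$2$ factors at once. Choosing the group free abelian on the ordered pairs and encoding each pair by its own basis vector in the sandwich matrix achieves this, and has the added virtue of keeping every sandwich entry nonzero, so that no evaluation degenerates to the semigroup zero and the simpler \Rms\ $\mathcal{M}(I,\mathcal{G},\Lambda;P)$ suffices. A secondary point needing care is the zero case in the sufficiency direction, resolved by the observation that (a)--(c) force $w$ and $w'$ to have the same content and the same set of length-$2$ factors.
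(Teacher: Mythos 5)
Your proof is correct, and its sufficiency half is essentially the paper's own argument: the same closed form for $w\varphi$ obtained by induction from~\eqref{eq:rms}, the same use of commutativity to collect the sandwich entries into exponents $\occ_{ab}(w)$, the same derivation of equal single-letter multiplicities from (b) and (c) (this counting step is exactly the paper's Lemma~\ref{lem:balanced}), and the same two-pronged treatment of the zero cases. Where you genuinely diverge is in the necessity witnesses. The paper fixes a \emph{single} \Rms\ $\mathcal{M}\left(\{1,2\},\mathbb{C}_\infty,\{1,2\};\bigl(\begin{smallmatrix}e&c\\e&e\end{smallmatrix}\bigr)\right)$ over the infinite cyclic group and varies only the substitution: one substitution each for (a) and (b), and for (c) one substitution per length-$2$ word, with a case split between $v=yz$ with $y\ne z$ and $v=y^2$. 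You instead vary the semigroup with the instance: left-zero and right-zero bands for (a) and (b), and for (c) a \Rms\ over the free abelian group on the ordered pairs of letters of $ww'$, whose basis coordinates record all the counts $\occ_{ab}(w)$ simultaneously in a single substitution. Your route buys uniformity: there is no case split on $a=b$ versus $a\ne b$, every sandwich entry is nonzero so no evaluation degenerates, and in fact your third witness alone already establishes all of (a)--(c), since with $\varphi(x)=(x,1,x)$ the first and last coordinates of $w\varphi$ are precisely the first and last letters of $w$ --- your left-zero and right-zero constructions are redundant. The paper's route buys a sharper byproduct: because all of its substitutions land in one fixed semigroup, the proof shows that the identities of that single concrete \Rms\ over $\mathbb{C}_\infty$ are exactly the identities holding in \emph{all} \Rmss\ over abelian groups, a strengthening invisible from your construction, where the rank of the group grows with $|\al(ww')|$.
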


\begin{proof}
The result is basically known. For the special case of \Rmss\ of the form $\mathcal{M}(I,\mathcal{G},\Lambda;P)$, it had been proven by \cite{KR79}; some other special cases were considered in a preprint by \cite{Ma80}. For the reader's convenience, we provide a self-contained proof (which is not difficult at all).

For the `only if' part, let $\mathbb{C}_\infty$ stand for the infinite cyclic group. We fix a generator $c$ for $\mathbb{C}_\infty$ and consider the \Rms\ $\mathcal{S}:=\mathcal{M}\left(\{1,2\},\mathbb{C}_\infty,\{1,2\};P\right)$ where $P:=\begin{pmatrix}e&c\\e&e\end{pmatrix}$, with $e:=c^0$. The identity $w\bumpeq w'$  holds in $\mathcal{S}$. Define a substitution $\alpha\colon X\to\mathcal{S}$ by
\[
x\alpha=\begin{cases}
(1,e,1)&\text{if $x$ is the first letter of $w$,}\\
(2,e,2)&\text{otherwise.}
\end{cases}
\]
By~\eqref{eq:rms}, the first entry of the triple $w\alpha$ is 1, and since $w\alpha=w'\alpha$, so is the first entry of the triple $w'\alpha$. This is only possible provided that $w'$ starts with $x$. We have thus shown that the condition (a) is satisfied. Similarly, by using the substitution $\omega\colon X\to\mathcal{S}$ such that
\[
x\omega=\begin{cases}
(1,e,1)&\text{if $x$ is the last letter of $w$,}\\
(2,e,2)&\text{otherwise,}
\end{cases}
\]
one verifies that (b) holds as well.

In order to verify (c), take a word $v$ of length $2$ that occurs in $w$. First consider the case of $v=yz$, with $y$ and $z$ being distinct. Here we invoke the substitution $\vartheta\colon X\to\mathcal{S}$ such that
\[
x\vartheta=\begin{cases}
(1,e,1)&\text{if $x=y$,}\\
(2,e,2)&\text{if $x=z$,}\\
(1,e,2)&\text{otherwise.}
\end{cases}
\]
Using the rule~\eqref{eq:rms} and the structure of the sandwich matrix $P$, we see that the middle entries of the triples $w\vartheta$ and $w'\vartheta$ are equal to $c^{\occ_{yz}(w)}$ and respectively $c^{\occ_{yz}(w')}$. Since $w\vartheta=w'\vartheta$, we get $\occ_{yz}(w)=\occ_{yz}(w')$.

It remains to analyze the case of $v=y^2$. In this case the substitution $\psi\colon X\to\mathcal{S}$ defined by
\[
x\psi=\begin{cases}
(2,e,1)&\text{if $x=y$,}\\
(1,e,2)&\text{otherwise}
\end{cases}
\]
has the property that the middle entries of the triples $w\psi$ and $w'\phi$ are equal to $c^{\occ_{y^2}(w)}$ and respectively $c^{\occ_{y^2}(w')}$. The equality $w\psi=w'\psi$ yields $\occ_{y^2}(w)=\occ_{y^2}(w')$. Thus,   (c) holds for every word of length $2$.

For the `if' part, we isolate an observation that will be re-used later.
\begin{lemma}
\label{lem:balanced}
If two words $w$ and $w'$ satisfy the conditions \emph{(a)--(c)}, then each letter occurs in $w$ and $w'$ the same number of times.
\end{lemma}

\begin{proof}
For each letter $x\in\al(w)$, we have
\[
\occ_x(w)=\sum_{y\in\al(w)}\occ_{xy}(w)+\begin{cases}1&\text{if the last letter of $w$ is $x$,}\\ 0&\text{otherwise.}\end{cases}
\]
The same formula holds for $w'$ and since, by (c), $\occ_{xy}(w)=\occ_{xy}(w')$ for every letter $y$ and, by (b), $w'$ ends with $x$ if and only if so does $w$, we conclude that $\occ_x(w)=\occ_x(w')$.\qed
\end{proof}

Now consider an arbitrary abelian group $\mathcal{G}$ and an arbitrary \Rms\ $\mathcal{M}^0(I,\mathcal{G},\Lambda;P)$ over $\mathcal{G}$. Take any substitution \[\varphi\colon X\to\mathcal{M}^0(I,\mathcal{G},\Lambda;P).\] If $x\varphi=0$ for some $x\in\al(w)$, then clearly $w\varphi=0$ and, by Lemma~\ref{lem:balanced}, $w'\varphi=0$, too. Thus, assume that $x\varphi\in I\times\mathcal{G}\times\Lambda$ for every $x\in\al(w)$. Let $x\varphi=(i(x),g(x),\lambda(x))$. The multiplication rule~\eqref{eq:rms} then ensures that the equality $w\varphi=0$ is only possible if $p_{\lambda(x)i(y)}=0$ for some (not necessarily distinct) letters $x,y$ such that the word $xy$ occurs in $w$. By (c), $xy$ occurs also in $w'$ whence $w'\varphi=0$. By symmetry, $w'\varphi=0$ implies $w\varphi=0$.

It remains to analyze the situation with both $w\varphi\ne0$ and $w'\varphi\ne0$, in which case $p_{\lambda(x)i(y)}\in\mathcal{G}$ whenever the word $xy$ occurs in $w$. Let $x_{\mathrm{first}}$ and $x_{\mathrm{last}}$ be the first and respectively the last letter of $w$. Using the rule~\eqref{eq:rms} and the fact that the group $\mathcal{G}$ is abelian, one readily computes that $w\varphi=(i(x_{\mathrm{first}}),g,\lambda(x_{\mathrm{last}}))$, with the middle entry $g$ given by the following expression:
\[
g=\prod_{x\in\al(w)}g(x)^{\occ_x(w)}\quad\times\prod_{\substack{x,y\in\al(w)\\xy\text{ occurs in }w}}p_{\lambda(x)i(y)}^{\occ_{xy}(w)}.
\]
In view of (a)--(c) and Lemma~\ref{lem:balanced}, we get $w'\varphi=(i(x_{\mathrm{first}}),g,\lambda(x_{\mathrm{last}}))$, with the same group entry $g$. Hence, the equality $w\varphi=w'\varphi$ holds.\qed
\end{proof}

\section{Structure and identities of $\mathcal{J}_4$}
\label{sec:j4}

The main aim of the present paper is the identity checking problem for the Kauffman monoid $\mathcal{K}_4$. In view of the bijection between $\mathcal{K}_4$ and $\mathcal{J}_4\times\mathbb{N}_0$, it is handy to have a closer look at the Jones monoid $\mathcal{J}_4$. The latter monoid consists of $\dfrac{1}{5}\dbinom{8}{4}=14$ diagrams: the identity diagram with four $t$-wires, nine diagrams with two $t$-wires, and four diagrams without $t$-wires. Fig.~\ref{fig:jones} shows the nonidentity diagrams in $\mathcal{J}_4$.

\begin{figure}[htp]
\begin{center}
\begin{tikzpicture}
[scale=0.55]
\foreach \x in {0,2.5,5,7.5,10,12.5} \foreach \y in {0,1,2,3,5,6,7,8,10,11,12,13} \filldraw (\x,\y) circle (4pt);
\node[] at (1.4,9.4) {$\bar{h}_3$};
\draw (0,10) -- (2.5,10);
\draw (0,11) -- (2.5,11);
\draw (0,12) .. controls (0.5,12.5) .. (0,13);
\draw (2.5,12) .. controls (2,12.5) .. (2.5,13);
\node[] at (6.4,9.4) {$\bar{h}_3\bar{h}_2$};
\draw (5,10) -- (7.5,10);
\draw (5,11) -- (7.5,13);
\draw (5,12) .. controls (5.5,12.5) .. (5,13);
\draw (7.5,11) .. controls (7,11.5) .. (7.5,12);
\node[] at (11.4,9.4) {$\bar{h}_3\bar{h}_2\bar{h}_1$};
\draw (10,10) -- (12.5,12);
\draw (10,11) -- (12.5,13);
\draw (10,12) .. controls (10.5,12.5) .. (10,13);
\draw (12.5,10) .. controls (12,10.5) .. (12.5,11);
\node[] at (1.4,4.4) {$\bar{h}_2\bar{h}_3$};
\draw (0,5) -- (2.5,5);
\draw (0,6) .. controls (0.5,6.5) .. (0,7);
\draw (0,8) -- (2.5,6);
\draw (2.5,7) .. controls (2,7.5) .. (2.5,8);
\node[] at (6.4,4.4) {$\bar{h}_2$};
\draw (5,5) -- (7.5,5);
\draw (5,6) .. controls (5.5,6.5) .. (5,7);
\draw (7.5,6) .. controls (7,6.5) .. (7.5,7);
\draw (5,8) -- (7.5,8);
\node[] at (11.4,4.4) {$\bar{h}_2\bar{h}_1$};
\draw (10,5) -- (12.5,7);
\draw (10,6) .. controls (10.5,6.5) .. (10,7);
\draw (10,8) -- (12.5,8);
\draw (12.5,5) .. controls (12,5.5) .. (12.5,6);
\node[] at (1.4,-0.6) {$\bar{h}_1\bar{h}_2\bar{h}_3$};
\draw (0,0) .. controls (0.5,0.5) .. (0,1);
\draw (0,2) -- (2.5,0);
\draw (0,3) -- (2.5,1);
\draw (2.5,2) .. controls (2,2.5) .. (2.5,3);
\node[] at (6.4,-0.6) {$\bar{h}_1\bar{h}_2$};
\draw (5,0) .. controls (5.5,0.5) .. (5,1);
\draw (5,2) -- (7.5,0);
\draw (5,3) -- (7.5,3);
\draw (7.5,1) .. controls (7,1.5) .. (7.5,2);
\node[] at (11.4,-0.6) {$\bar{h}_1$};
\draw (10,0) .. controls (10.5,0.5) .. (10,1);
\draw (12.5,0) .. controls (12,0.5) .. (12.5,1);
\draw (10,2) -- (12.5,2);
\draw (10,3) -- (12.5,3);
\end{tikzpicture}

\vskip .55cm

\begin{tikzpicture}
[scale=0.55]
\foreach \x in {5,7.5,10,12.5} \foreach \y in {0,1,2,3,5,6,7,8} \filldraw (\x,\y) circle (4pt);
\node[] at (6.4,4.4) {$\bar{h}_1\bar{h}_3$};
\draw (5,5) .. controls (5.5,5.5) .. (5,6);
\draw (7.5,5) .. controls (7,5.5) .. (7.5,6);
\draw (5,7) .. controls (5.5,7.5) .. (5,8);
\draw (7.5,7) .. controls (7,7.5) .. (7.5,8);
\node[] at (11.4,4.4) {$\bar{h}_1\bar{h}_3\bar{h}_2$};
\draw (10,5) .. controls (10.5,5.5) .. (10,6);
\draw (10,7) .. controls (10.5,7.5) .. (10,8);
\draw (12.5,5) .. controls (11,6.5) .. (12.5,8);
\draw (12.5,6) .. controls (12,6.5) .. (12.5,7);
\node[] at (6.4,-0.6) {$\bar{h}_2\bar{h}_1\bar{h}_3$};
\draw (5,0) .. controls (6,1.5) .. (5,3);
\draw (5,1) .. controls (5.5,1.5) .. (5,2);
\draw (7.5,0) .. controls (7,0.5) .. (7.5,1);
\draw (7.5,2) .. controls (7,2.5) .. (7.5,3);
\node[] at (11.4,-0.6) {$\bar{h}_2\bar{h}_1\bar{h}_3\bar{h}_2$};
\draw (10,0) .. controls (11,1.5) .. (10,3);
\draw (10,1) .. controls (10.5,1.5) .. (10,2);
\draw (12.5,0) .. controls (11.5,1.5) .. (12.5,3);
\draw (12.5,1) .. controls (12,1.5) .. (12.5,2);
\end{tikzpicture}
\end{center}
\caption{The nonidentity diagrams in $\mathcal{J}_4$}\label{fig:jones}
\end{figure}
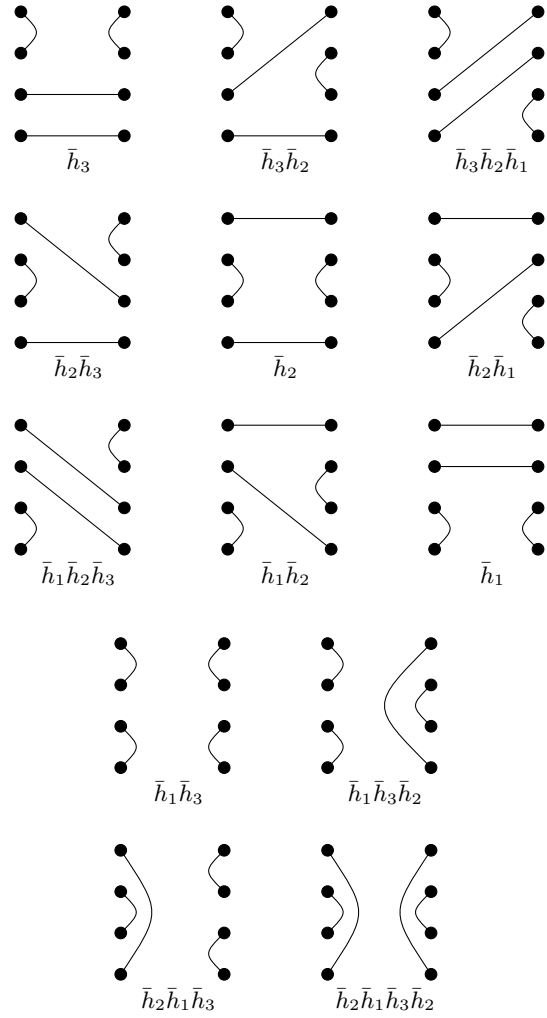

As a warm-up for our core results, we prove here a structure property of the monoid $\mathcal{J}_4$. This property quickly leads to a polynomial time algorithm for \textsc{Check-Id}($\mathcal{J}_4$).

Let $\mathcal{J}_4^\flat$ be the ideal of $\mathcal{J}_4$ consisting of its nonidentity diagrams, that is, of the 13 diagrams shown in Fig.~\ref{fig:jones}. We consider the following `cutting' map $\mathfrak{c}\colon\mathcal{J}_4^\flat\to\mathcal{J}_4^\flat$: if a diagram has no $t$-wires, $\mathfrak{c}$ fixes it; if a diagram has two $t$-wires, $\mathfrak{c}$ cuts the $t$-wires and then connects the loose ends, forming one new $\ell$-wire and one new $r$-wire, see Fig.~\ref{fig:cutting} for an illustration. More formally, the action of $\mathfrak{c}$ on a diagram with two $t$-wires amounts to:
 \begin{itemize}
   \item connecting the left points of the $t$-wires with an $\ell$-wire;
   \item connecting the right points of the $t$-wires with an $r$-wire;
   \item removing the $t$-wires.
\end{itemize}
Observe that the above operations make sense for diagrams with two $t$-wires in the Jones monoid $\mathcal{J}_n$ for every even $n\ge4$.

For the nine diagrams with two $t$-wires in the $3\times 3$-matrix in the upper half of Fig.~\ref{fig:jones}, the effect of the map $\mathfrak{c}$ can be described as follows:
\begin{itemize}
\item each of the four corner diagrams is sent to $\bar{h}_1\bar{h}_3$;
\item each of the two extreme diagrams in the middle row (column) is sent to $\bar{h}_2\bar{h}_1\bar{h}_3$ (respectively, $\bar{h}_1\bar{h}_3\bar{h}_2$);
\item the central diagram is sent to $\bar{h}_2\bar{h}_1\bar{h}_3\bar{h}_2$.
\end{itemize}

\begin{figure}[htb]
\begin{center}
\begin{tikzpicture}
[scale=0.55]
\foreach \x in {1,3.5} \foreach \y in {0,1,2,3} \filldraw (\x,\y) circle (4pt);
\draw (1,3) -- (3.5,1);
\draw (1,0) -- (3.5,0);
\draw (1,2) .. controls (1.5,1.5) .. (1,1);
\draw (3.5,2) .. controls (3,2.5) .. (3.5,3);
\draw[red,thick,->] (4,1.5) to (5.5,1.5);
\foreach \x in {6,8.5} \foreach \y in {0,1,2,3} \filldraw (\x,\y) circle (4pt);
\draw (6,3) -- (8.5,1);
\draw (6,0) -- (8.5,0);
\draw (6,2) .. controls (6.5,1.5) .. (6,1);
\draw (8.5,2) .. controls (8,2.5) .. (8.5,3);
\draw[red,thick,dashed] (7.25,-0.5) -- (7.25,3.5);
\draw[red,thick,->] (9,1.5) to (10.5,1.5);
\foreach \x in {11,13.5} \foreach \y in {0,1,2,3} \filldraw (\x,\y) circle (4pt);
\draw (11,3) -- (12,2.1);
\draw (13.5,1) -- (12.5,1.8);
\draw (11,0) -- (12,0);
\draw (13.5,0) -- (12.5,0);
\draw (11,2) .. controls (11.5,1.5) .. (11,1);
\draw (13.5,2) .. controls (13,2.5) .. (13.5,3);
\draw[red,thick,->] (14,1.5) to (15.5,1.5);
\foreach \x in {16,18.5} \foreach \y in {0,1,2,3} \filldraw (\x,\y) circle (4pt);
\draw (16,2) .. controls (16.5,1.5) .. (16,1);
\draw (16,3) .. controls (17,1.5) .. (16,0);
\draw (18.5,0) .. controls (18,0.5) .. (18.5,1);
\draw (18.5,2) .. controls (18,2.5) .. (18.5,3);
\end{tikzpicture}
\end{center}
\caption{The cutting map $\mathfrak{c}$ on $\mathcal{J}_4^\flat$}\label{fig:cutting}
\end{figure}
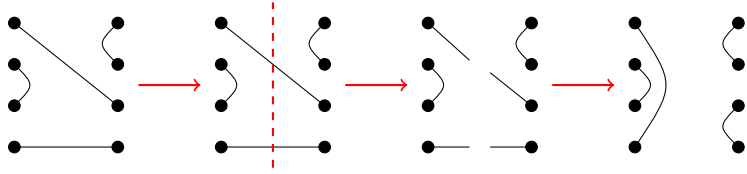

\begin{lemma}
\label{lem:cutting}
The map $\mathfrak{c}\colon\mathcal{J}_4^\flat\to\mathcal{J}_4^\flat$ is an endomorphism of $\mathcal{J}_4^\flat$.
\end{lemma}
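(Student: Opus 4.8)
The plan is to use the fact that $\mathfrak{c}$ lands in the four-element set $N\subseteq\mathcal{J}_4^\flat$ of diagrams without $t$-wires, and to analyze multiplication both inside $N$ and across the whole of $\mathcal{J}_4^\flat$. First I would record that every diagram in $\mathcal{J}_4^\flat$ has rank (number of $t$-wires) equal to $0$ or $2$, and that a rank-$0$ diagram is determined by the pair consisting of its non-crossing matching of the four left points and its non-crossing matching of the four right points. Since there are exactly two non-crossing matchings of four collinear points, this recovers the four elements of $N$. Writing $\mathfrak{c}(\xi)=(\mathfrak{c}_L(\xi),\mathfrak{c}_R(\xi))$ for these two matchings, I would next determine multiplication inside $N$ from the composition rules of Section~\ref{sec:k&j}: if the first factor has no $t$-wires, its $\ell$-wires survive unchanged and no composite $\ell$-wire can form (a composite $\ell$-wire needs a $t$-wire of the first factor at each end), and dually for the second factor; hence $(\ell_1,r_1)(\ell_2,r_2)=(\ell_1,r_2)$, so that $N$ is a $2\times2$ rectangular band. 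In particular $\mathfrak{c}(\xi)\mathfrak{c}(\eta)=(\mathfrak{c}_L(\xi),\mathfrak{c}_R(\eta))$, so the endomorphism property reduces to the two equalities $\mathfrak{c}_L(\xi\eta)=\mathfrak{c}_L(\xi)$ and $\mathfrak{c}_R(\xi\eta)=\mathfrak{c}_R(\eta)$.

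These two statements are mirror images of each other. The reflection that exchanges left and right points is an anti-automorphism of $\mathcal{J}_4$ (it reverses products) and it commutes with $\mathfrak{c}$, since cutting is symmetric under left–right reflection. Applying the first equality to the reflected product $(\xi\eta)^*=\eta^*\xi^*$ therefore yields the second, and it suffices to prove $\mathfrak{c}_L(\xi\eta)=\mathfrak{c}_L(\xi)$. If $\xi$ already lies in $N$ this is immediate, so I would assume $\xi$ has rank $2$; let $\{a,b\}$ be the left points joined by the $\ell$-wire of $\xi$ and let $c,d$ be the left endpoints of its two $t$-wires, so that $\mathfrak{c}_L(\xi)=\{\{a,b\},\{c,d\}\}$.

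The heart of the proof is a dichotomy on how the two $t$-wires of $\xi$ behave in the product. Following the composition rules, each of them either survives as a $t$-wire of $\xi\eta$ (when it meets a $t$-wire of $\eta$) or is turned back into $\eta$ along an $\ell$-wire. Because every composite $\ell$-wire of the product carries a $t$-wire of $\xi$ at each of its two ends, while $\xi$ has exactly two $t$-wires, the two wires must behave alike: either both survive, or both are consumed by a single composite $\ell$-wire joining precisely $c$ and $d$. In the first case $\xi\eta$ has rank $2$, its $\ell$-wire is still $\{a,b\}$ and its $t$-wires still emanate from $c$ and $d$, so cutting $\xi\eta$ reconnects $c$ with $d$ and gives $\mathfrak{c}_L(\xi\eta)=\{\{a,b\},\{c,d\}\}=\mathfrak{c}_L(\xi)$. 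In the second case $\xi\eta$ has rank $0$ and its left matching consists of the surviving $\ell$-wire $\{a,b\}$ of $\xi$ together with the composite $\ell$-wire $\{c,d\}$, which is again exactly $\mathfrak{c}_L(\xi)$. In both cases the desired equality holds.

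I expect the only genuinely delicate point to be this all-or-nothing dichotomy, that is, ruling out that one $t$-wire of $\xi$ survives while the other is turned back. The short argument is that a turned-back $t$-wire is one end of some composite $\ell$-wire, whose opposite end is forced to be $\xi$'s only other $t$-wire; but a surviving through-strand cannot be such an end, a contradiction. Everything else is bookkeeping with the rules of Section~\ref{sec:k&j}, and planarity enters only to guarantee that the reconnected matchings are non-crossing — a point already settled by the explicit description of $\mathfrak{c}$ given before the lemma.
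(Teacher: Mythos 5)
Your proof is correct and takes essentially the same route as the paper's geometric argument: reduce to comparing the $\ell$-wires of $\xi\mathfrak{c}\cdot\eta\mathfrak{c}$ and $(\xi\eta)\mathfrak{c}$, handle the $r$-wires by left--right symmetry, and observe that the two $t$-wires of a rank-two $\xi$ either both survive in $\xi\eta$ (keeping the same left endpoints) or are both absorbed into a single composite $\ell$-wire joining those endpoints. The differences are only presentational: you make explicit the all-or-nothing dichotomy via wire-tracing, which the paper asserts without proof (for $\mathcal{J}_4$ it also follows from parity and counting of the four left points), and you pre-package the image as a $2\times 2$ rectangular band, a fact the paper records only afterwards in the proof of Proposition~\ref{prop:structure}.
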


\begin{proof}
The lemma can be verified by a direct computation. We prefer a more geometric argument since it also works in a more general situation.

Let $\xi\in\mathcal{J}_4^\flat$ have two $t$-wires. Then the $\ell$-wires of $\xi\mathfrak{c}$ are:
\begin{equation}
\label{eq:wires}
\begin{aligned}
&\text{the $\ell$-wire of $\xi$, and}\\
&\text{the $\ell$-wire that connects the left points of the $t$-wires of $\xi$.}
\end{aligned}
\end{equation}
Now consider an arbitrary diagram $\eta\in\mathcal{J}_4^\flat$. The product $\xi\mathfrak{c}\cdot\eta\mathfrak{c}$ has the same $\ell$-wires~\eqref{eq:wires}. The product $\xi\eta$ has either two or no $t$-wires. In the latter case its $\ell$-wires coincide with those in~\eqref{eq:wires}. If $\xi\eta$ has two $t$-wires, their left points are the same as the left points of the $t$-wires of $\xi$ whence the $\ell$-wires of $(\xi\eta)\mathfrak{c}$ are those in~\eqref{eq:wires} again.

We see that the $\ell$-wires of $\xi\mathfrak{c}\cdot\eta\mathfrak{c}$ and $(\xi\eta)\mathfrak{c}$ are equal. By symmetry, $\xi\mathfrak{c}\cdot\eta\mathfrak{c}$ and $(\xi\eta)\mathfrak{c}$ have the same $r$-wires as well. Hence, $\xi\mathfrak{c}\cdot\eta\mathfrak{c}=(\xi\eta)\mathfrak{c}$.\qed
\end{proof}

\begin{remark}
Let $n\ge 4$ be an even number. The set $\mathcal{J}_n^\flat$ of all diagrams with at most two $t$-wires forms a subsemigroup in the Jones  monoid $\mathcal{J}_n$. The proof of Lemma~\ref{lem:cutting} shows that the cutting map is an endomorphism of $\mathcal{J}_n^\flat$.
\end{remark}

An endomorphism that fixes each element in its image is called a \emph{retraction}. We need the following folklore result of semigroup theory.

\begin{lemma}
\label{lem:retract}
If $\varphi$ is a retraction of a semigroup $\mathcal{S}$ such that $\mathcal{S}\varphi$ is an ideal of $\mathcal{S}$, then $\mathcal{S}$ is isomorphic to a subdirect product of the ideal $\mathcal{S}\varphi$ with the Rees quotient $\mathcal{S}/\mathcal{S}\varphi$.\qed
\end{lemma}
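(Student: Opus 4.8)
The plan is to exhibit an explicit subdirect embedding and verify its defining properties one by one. Write $I:=\mathcal{S}\varphi$ for the image of the retraction, which by hypothesis is an ideal of $\mathcal{S}$, and let $\nu\colon\mathcal{S}\to\mathcal{S}/I$ be the canonical (Rees) projection onto the Rees quotient. I would define the map
\[
\Phi\colon\mathcal{S}\to I\times(\mathcal{S}/I),\qquad s\mapsto(s\varphi,\,s\nu),
\]
and then show that $\Phi$ is an injective homomorphism whose image projects onto each of the two factors; by definition this says precisely that $\Phi(\mathcal{S})$ is a subdirect product of $I$ and $\mathcal{S}/I$ isomorphic to $\mathcal{S}$.

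Checking that $\Phi$ is a homomorphism would be immediate. Since $I$ is an ideal, and hence a subsemigroup, the map $\varphi$ may be viewed as a surjective homomorphism $\mathcal{S}\to I$; the projection $\nu$ is a homomorphism by the very construction of the Rees quotient; and a map into a direct product is a homomorphism as soon as each of its two coordinate maps is.

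The heart of the argument is injectivity, and this is exactly where the retraction hypothesis enters. I would suppose $s\varphi=t\varphi$ and $s\nu=t\nu$ and split into two cases according to the common value $s\nu=t\nu$. If this value is the zero of $\mathcal{S}/I$, then $s,t\in I$; because $\varphi$ fixes every element of its image, $s=s\varphi=t\varphi=t$. If instead the common value is nonzero, then the Rees projection identifies $s$ and $t$ only when they are already equal, so again $s=t$. Either way $\Phi$ is injective.

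Finally, I would verify surjectivity of the two projections, which is routine. For the first factor, each $i\in I$ satisfies $i\varphi=i$ by the retraction property, so $i$ is the first coordinate of $\Phi(i)=(i,\,i\nu)$; for the second factor, $\nu$ is already surjective, so every class of $\mathcal{S}/I$ occurs as a second coordinate of some $\Phi(s)$. This completes the verification that $\Phi(\mathcal{S})$ is a subdirect product, and $\Phi$ is an isomorphism of $\mathcal{S}$ onto it. I do not anticipate any genuine obstacle here; the only point demanding care is the case split in the injectivity step, where the fact that $\varphi$ acts as the identity on the ideal $I$ is precisely what forces two preimages with the same image to coincide.
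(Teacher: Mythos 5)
Your proof is correct and complete: the map $\Phi\colon s\mapsto(s\varphi,\,s\nu)$ is the canonical subdirect embedding, and your case split in the injectivity step---using that $\varphi$ fixes the ideal $I=\mathcal{S}\varphi$ pointwise when the common Rees class is zero, and that the nonzero classes of the Rees quotient are singletons otherwise---is exactly where the retraction hypothesis does its work. The paper offers no proof to compare against (it states the lemma as semigroup folklore, with the proof omitted), and your argument is precisely the standard one the authors are implicitly invoking, so there is nothing to correct or contrast.
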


\begin{proposition}
\label{prop:structure}
The semigroup $\mathcal{J}_4^\flat$ is isomorphic to a subdirect product of a $2\times 2$ rectangular band with the \Rms\ $\mathcal{M}_3:=\mathcal{M}^0\left(\{1,2,3\},\mathcal{E},\{1,2,3\};\Bigl(\begin{smallmatrix}e&e&0\\e&e&e\\0&e&e\end{smallmatrix}\Bigr)\right)$ over the one-element group $\mathcal{E}=\{e\}$.
\end{proposition}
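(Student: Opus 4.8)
The plan is to apply Lemma~\ref{lem:retract} to the cutting map $\mathfrak{c}$. First I would note that $\mathfrak{c}$ is in fact a \emph{retraction}: it fixes every $t$-wire-free diagram, and since each diagram in its image is $t$-wire-free, we have $\mathfrak{c}^2=\mathfrak{c}$; together with Lemma~\ref{lem:cutting} this makes $\mathfrak{c}$ a retraction of $\mathcal{J}_4^\flat$. Its image $\mathcal{J}_4^\flat\mathfrak{c}$ is exactly the set of the four $t$-wire-free diagrams, and I would check that this set is an ideal: inspecting the wire-forming rules of Section~\ref{sec:k&j}, a $t$-wire can arise in a product only when \emph{both} factors already possess $t$-wires, so multiplying a $t$-wire-free diagram by anything again yields a $t$-wire-free diagram. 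Lemma~\ref{lem:retract} then presents $\mathcal{J}_4^\flat$ as a subdirect product of the ideal $\mathcal{J}_4^\flat\mathfrak{c}$ and the Rees quotient $\mathcal{J}_4^\flat/\mathcal{J}_4^\flat\mathfrak{c}$, and it remains only to identify the two factors.

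For the ideal I would encode each $t$-wire-free diagram by a pair $(a,b)$, where $a$ is its planar matching of the four left points and $b$ that of the four right points; planarity admits exactly two matchings on each side, giving the expected four elements. The first $\ell$-wire rule (and, symmetrically, the $r$-wire rule) shows that, for $t$-wire-free factors, the $\ell$-wires of a product are inherited from the left factor and the $r$-wires from the right factor, any circles formed in between being discarded in $\mathcal{J}_4$. Hence the product is $(a,b)(c,d)=(a,d)$, which is precisely the $2\times2$ rectangular band.

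For the Rees quotient the nine diagrams with two $t$-wires become the nonzero elements while the ideal collapses to the zero. Each such diagram has a single $\ell$-wire, a single $r$-wire, and two $t$-wires; planarity forces the $\ell$-wire to be one of $\{1,2\},\{2,3\},\{3,4\}$ and likewise for the $r$-wire, and once both are fixed the two $t$-wires are determined. This yields a bijection $\xi\mapsto(i,e,\lambda)$ recording the $\ell$-type $i\in\{1,2,3\}$ and the $r$-type $\lambda\in\{1,2,3\}$. For a product $\xi\eta$ of two such diagrams I would argue from the same rules that, whenever $\xi\eta$ still has two $t$-wires, its $\ell$-wire is that of $\xi$ and its $r$-wire is that of $\eta$, so the first and last coordinates transform as in~\eqref{eq:rms}; the middle coordinate is automatic since the group $\mathcal{E}$ is trivial. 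Thus everything reduces to computing the sandwich matrix, i.e.\ to deciding for which pairs (an $r$-type of $\xi$, an $\ell$-type of $\eta$) the product loses both $t$-wires.

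This final point is where the real work lies, and I expect it to be the main obstacle. Gluing $\xi$ to $\eta$, I would label each of the four middle points by the pair (wire of $\xi$ ending there, wire of $\eta$ ending there) and trace paths: a surviving $t$-wire is a path from a left $t$-endpoint of $\xi$ to a right $t$-endpoint of $\eta$ that alternates along the $r$-wire of $\xi$ and the $\ell$-wire of $\eta$. A short case analysis over the $3\times3$ possibilities then shows that the two $t$-wires are destroyed exactly when the $r$-type of $\xi$ and the $\ell$-type of $\eta$ form the pair $\{1,3\}$ in either order, and survive in the other seven cases. This is precisely the zero pattern of $\left(\begin{smallmatrix}e&e&0\\e&e&e\\0&e&e\end{smallmatrix}\right)$, so the quotient is isomorphic to $\mathcal{M}_3$, which completes the identification and hence the proof.
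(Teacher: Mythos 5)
Your proposal is correct and takes essentially the same route as the paper: you recognize $\mathfrak{c}$ as a retraction onto the ideal of $t$-wire-free diagrams, invoke Lemma~\ref{lem:retract}, and then identify the ideal with the $2\times 2$ rectangular band and the Rees quotient with $\mathcal{M}_3$. The only difference is that you spell out the verifications (counting planar matchings, and the case analysis showing the $t$-wires are destroyed exactly when the $r$-type of $\xi$ and the $\ell$-type of $\eta$ are the disjoint pair $\{1,3\}$) that the paper compresses into ``obviously'' and ``one can directly verify'', and your case analysis is accurate.
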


\begin{proof}
By the definition of the map $\mathfrak{c}\colon\mathcal{J}_4^\flat\to\mathcal{J}_4^\flat$, its image is the set $\mathcal{I}_4$ consisting of the four diagrams in $\mathcal{J}_4^\flat$ that have no $t$-wires. Since $\mathfrak{c}$ fixes each diagram in $\mathcal{I}_4$ and is an endomorphism by Lemma~\ref{lem:cutting}, $\mathfrak{c}$ is a retraction. Clearly, $\mathcal{I}_4$ is an ideal of $\mathcal{J}_4^\flat$. We are in a position to apply
Lemma~\ref{lem:retract}, which yields that $\mathcal{J}_4^\flat$ is isomorphic to a subdirect product of the ideal $\mathcal{I}_4$ with the Rees quotient $\mathcal{J}_4^\flat/\mathcal{I}_4$.

Obviously, $\mathcal{I}_4$ is a $2\times 2$ rectangular band. As for the Rees quotient $\mathcal{J}_4^\flat/\mathcal{I}_4$, it can be mapped onto the \Rms\ $\mathcal{M}_3$ as follows: the zero of $\mathcal{J}_4^\flat/\mathcal{I}_4$ is sent to 0 and the diagram in the $i$-th row and $j$-th column of the $3\times 3$-matrix in the upper half of Fig.~\ref{fig:jones} is sent to the triple $(i,e,j)$. One can directly verify that the bijection defined this way is an isomorphism between $\mathcal{J}_4^\flat/\mathcal{I}_4$ and $\mathcal{M}_3$.\qed
\end{proof}

Clearly, an identity holds in a subdirect product if and only if it holds in every factor of the product. Thus, Proposition~\ref{prop:structure} implies that an identity holds in the semigroup $\mathcal{J}_4^\flat$ if and only if it holds in both $\mathcal{I}_4$ and $\mathcal{M}_3$. Observe that the triples $(i,e,j)\in\mathcal{M}_3$ with $i,j\in\{1,2\}$ form a $2\times 2$ rectangular band. We see that $\mathcal{I}_4$ is isomorphic to a
subsemigroup in $\mathcal{M}_3$, and thus, satisfies all identities of the latter semigroup. Hence,  the semigroups $\mathcal{J}_4^\flat$ and $\mathcal{M}_3$ are \emph{equationally equivalent}, that is, they satisfy the same identities.

A combinatorial characterization of the identities of $\mathcal{M}_3$ is known. Namely, it easily follows from a result by \cite{Tr81} that an identity $w\bumpeq w'$ holds in $\mathcal{M}_3$ if and only if the words $w$ and $w'$ satisfy the conditions (a) and (b) of Proposition~\ref{prop:rms} along with the following condition:
\begin{itemize}
\item[(c')] each word of length $2$ occurs in $w$ if and only if it occurs in $w'$.
\end{itemize}

It is easy to characterize identities of a semigroup $\mathcal{S}$ that are inherited by the monoid $\mathcal{S}^1$. Namely, for a word $w\in X^+$ and a proper subset $Y$ of $\al(w)$, denote by $w_Y$ the word obtained from $w$ by removing all occurrences of the letters in $Y$. The following observation is another part of semigroup folklore.

\begin{lemma}
\label{lem:monoid}
Let $\mathcal{S}$ be a semigroup. The monoid $\mathcal{S}^1$ satisfies an identity $w\bumpeq w'$ with $\al(w)=\al(w')$ if and only if the identity $w_Y\bumpeq w'_Y$ holds in $\mathcal{S}$ for each $Y\subset\al(w)$.\qed
\end{lemma}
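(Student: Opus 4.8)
The plan is to exploit the single structural feature that distinguishes $\mathcal{S}^1$ from $\mathcal{S}$: the presence of an identity element $1$, which, under any substitution, lets us \emph{erase} from a word exactly those letters that are sent to $1$. Concretely, I would first isolate the following observation. Fix a substitution $\varphi\colon X\to\mathcal{S}^1$ and a word $u\in X^+$, and let $Y:=\{x\in\al(u):x\varphi=1\}$ collect the letters of $u$ sent to the identity. Since $1$ acts neutrally, deleting the occurrences of letters of $Y$ from $u$ does not change the value of the product: if $Y=\al(u)$ then $u\varphi=1$, while if $Y\subsetneq\al(u)$ then $u\varphi=(u_Y)\varphi$. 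Moreover, every letter occurring in $u_Y$ is sent by $\varphi$ to an element different from $1$, hence to an element of $\mathcal{S}$ (this holds whether or not $\mathcal{S}$ already possessed an identity, since in either case $\mathcal{S}^1\setminus\{1\}\subseteq\mathcal{S}$); thus $(u_Y)\varphi$ is computed entirely inside $\mathcal{S}$ and coincides with the value of $u_Y$ under the map $x\mapsto x\varphi$ regarded as a substitution $X\to\mathcal{S}$.

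For the ``only if'' direction, I would assume that $\mathcal{S}^1$ satisfies $w\bumpeq w'$ and fix a proper subset $Y\subset\al(w)$. Since $\al(w)=\al(w')$, the set $Y$ is also a proper subset of $\al(w')$, so $w_Y$ and $w'_Y$ are non-empty and have equal content. Given any substitution $\psi\colon X\to\mathcal{S}$, I would define $\varphi\colon X\to\mathcal{S}^1$ by $x\varphi:=x\psi$ for $x\notin Y$ and $x\varphi:=1$ for $x\in Y$. By the observation, $w\varphi=(w_Y)\psi$ and $w'\varphi=(w'_Y)\psi$, and the assumed equality $w\varphi=w'\varphi$ yields $(w_Y)\psi=(w'_Y)\psi$ in $\mathcal{S}$. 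As $\psi$ was arbitrary, $\mathcal{S}$ satisfies $w_Y\bumpeq w'_Y$.

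For the ``if'' direction, I would assume that $\mathcal{S}$ satisfies $w_Y\bumpeq w'_Y$ for every proper $Y\subset\al(w)$ and take an arbitrary $\varphi\colon X\to\mathcal{S}^1$. Set $Y:=\{x\in\al(w):x\varphi=1\}$; because $\al(w)=\al(w')$, this same set describes the letters of $w'$ sent to $1$, so a single $Y$ governs the erasure in both words. If $Y=\al(w)$ then $w\varphi=1=w'\varphi$. Otherwise $Y$ is a proper subset, and the observation gives $w\varphi=(w_Y)\varphi$ and $w'\varphi=(w'_Y)\varphi$, both computed in $\mathcal{S}$ via the induced substitution $X\to\mathcal{S}$; the hypothesis applied to this $Y$ forces $(w_Y)\varphi=(w'_Y)\varphi$, whence $w\varphi=w'\varphi$. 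Since $\varphi$ was arbitrary, $\mathcal{S}^1$ satisfies $w\bumpeq w'$.

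The two substitution constructions are routine. The only points requiring care are the bookkeeping around the degenerate case $Y=\al(w)$ --- which is precisely why it suffices to quantify over \emph{proper} subsets --- and the use of $\al(w)=\al(w')$ to ensure that one set $Y$ controls both words simultaneously. I would also flag the membership claim $\mathcal{S}^1\setminus\{1\}\subseteq\mathcal{S}$ as the one subtlety deserving an explicit sentence, since it is what makes the erased products genuinely live in $\mathcal{S}$ and thus allows the hypothesis about identities of $\mathcal{S}$ to apply.
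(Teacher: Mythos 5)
Your proof is correct and complete: the paper states this lemma without proof, as semigroup folklore (note the lemma ends with a qed symbol), and your argument --- erasing the letters sent to $1$ via the two substitution constructions, one direction planting $1$ on a chosen $Y$ and the other splitting an arbitrary substitution by the preimage of $1$ --- is precisely the standard argument the authors intend. You also correctly handle the only two delicate points, namely the degenerate case $Y=\al(w)$ and the use of $\al(w)=\al(w')$ to ensure a single set $Y$ governs both words, so there is nothing to add.
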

The restriction $\al(w)=\al(w')$ in Lemma~\ref{lem:monoid} is not essential for what follows because a monoid satisfying a semigroup identity $w\bumpeq w'$ with $\al(w)\ne\al(w')$ is easily seen to be a group while monoids we consider are very far from being groups.

Lemma~\ref{lem:monoid} readily implies that if two semigroups $\mathcal{S}_1$ and $\mathcal{S}_2$ are equationally equivalent, so are the monoids $\mathcal{S}_1^1$ and $\mathcal{S}_2^1$. Hence, the Jones monoid $\mathcal{J}_4$ is equationally equivalent to the monoid $\mathcal{M}_3^1$. Summing up, we get the following characterization of the identities of the monoid $\mathcal{J}_4$.
\begin{theorem}
\label{thm:jones description}
An identity $w\bumpeq w'$ holds in the Jones monoid $\mathcal{J}_4$ if and only if $\al(w)=\al(w')$ and, for each $Y\subset\al(w)$, the words $u:=w_Y$ and $u':=w'_Y$ satisfy the following three conditions:
\begin{itemize}
\item[\emph{(a)}] the first letter of $u$ is the same as the first letter of $u'$;
\item[\emph{(b)}] the last letter of $u$ is the same as the last letter of $u'$;
\item[\emph{(c')}] each word of length $2$ occurs in $u$ if and only if it occurs in $u'$.\qed
\end{itemize}
\end{theorem}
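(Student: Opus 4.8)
The plan is to assemble Theorem~\ref{thm:jones description} from the three ingredients already laid out in the section, so that the proof reduces to a short chain of equational equivalences. The key identity established just before the theorem is that $\mathcal{J}_4^\flat$ and the Rees matrix semigroup $\mathcal{M}_3$ are equationally equivalent (this is the content of Proposition~\ref{prop:structure} together with the observation that $\mathcal{I}_4$ embeds in $\mathcal{M}_3$). First I would note that, by Lemma~\ref{lem:monoid}, equational equivalence of semigroups lifts to their monoid completions, so $\mathcal{J}_4=(\mathcal{J}_4^\flat)^1$ is equationally equivalent to $\mathcal{M}_3^1$. Thus an identity $w\bumpeq w'$ holds in $\mathcal{J}_4$ if and only if it holds in $\mathcal{M}_3^1$.

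Next I would invoke Lemma~\ref{lem:monoid} once more, now applied to $\mathcal{S}=\mathcal{M}_3$ and $\mathcal{S}^1=\mathcal{M}_3^1$. The lemma tells us that, under the hypothesis $\al(w)=\al(w')$, the monoid $\mathcal{M}_3^1$ satisfies $w\bumpeq w'$ precisely when every `reduct' identity $w_Y\bumpeq w'_Y$ holds in $\mathcal{M}_3$ as $Y$ ranges over all proper subsets of $\al(w)$. The remark following Lemma~\ref{lem:monoid} justifies imposing $\al(w)=\al(w')$ at the outset: a monoid satisfying a semigroup identity with mismatched contents would have to be a group, which $\mathcal{J}_4$ plainly is not. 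Finally, each reduct condition ``$w_Y\bumpeq w'_Y$ holds in $\mathcal{M}_3$'' is translated into the combinatorial conditions (a), (b), (c$'$) via the Trahtman-style characterization of the identities of $\mathcal{M}_3$ stated in the display preceding the theorem. Setting $u:=w_Y$ and $u':=w'_Y$ then yields exactly the three listed conditions, completing the equivalence.

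I do not anticipate a genuine obstacle here, since every nontrivial step has been isolated as a named lemma or proposition earlier in the excerpt; the theorem is essentially a bookkeeping composition of Proposition~\ref{prop:structure}, Lemma~\ref{lem:monoid}, and the cited characterization of $\mathcal{M}_3$'s identities. The one point demanding mild care is the handedness of Lemma~\ref{lem:monoid}: it is stated for a single semigroup $\mathcal{S}$ and its completion $\mathcal{S}^1$, so I must apply it cleanly to $\mathcal{M}_3$ rather than conflating it with the transfer of equational equivalence. Concretely, I would keep the two uses separate---first transfer equivalence from $\mathcal{J}_4^\flat$ to $\mathcal{M}_3$ up to the monoid level (yielding $\mathcal{J}_4\equiv\mathcal{M}_3^1$), then apply Lemma~\ref{lem:monoid} to decompose $\mathcal{M}_3^1$-satisfaction into $\mathcal{M}_3$-satisfaction of the reducts. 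The verification that $\al(w)=\al(w')$ is forced can be stated in one sentence, and the final substitution of (a), (b), (c$'$) for the reducts is immediate, so the write-up should be only a few lines.
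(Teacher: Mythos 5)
Your proposal is correct and takes essentially the same route as the paper: the paper likewise combines Proposition~\ref{prop:structure} with the embedding of $\mathcal{I}_4$ into $\mathcal{M}_3$ to get equational equivalence of $\mathcal{J}_4^\flat$ and $\mathcal{M}_3$, lifts this to $\mathcal{J}_4\equiv\mathcal{M}_3^1$ via Lemma~\ref{lem:monoid} (dismissing identities with $\al(w)\ne\al(w')$ exactly as you do, via the remark that such a monoid would be a group), and then decomposes satisfaction in $\mathcal{M}_3^1$ into Trahtman's conditions (a), (b), (c$'$) on the reducts $w_Y$. No gaps; your care about applying Lemma~\ref{lem:monoid} separately for the transfer of equivalence and for the reduct decomposition matches the paper's bookkeeping.
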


\begin{remark}
It is not immediately clear whether Theorem~\ref{thm:jones description} provides a polynomial time algorithm for \textsc{Check-Id}($\mathcal{J}_4$) since a brute force verification of the conditions (a)--(c') for every proper subset of the set $\al(w)$ requires exponential in $|\al(w)|$ time. In fact, there exist examples of finite semigroups $\mathcal{S}$ such that \textsc{Check-Id}($\mathcal{S}$) is in $\mathsf{P}$ while \textsc{Check-Id}($\mathcal{S}^1$) is $\mathsf{coNP}$-complete, see, e.g., \citep{Se05,Kl09}. However, \citet{SS06} have proved that one can verify the conditions (a)--(c') in polynomial in $|ww'|$ time. Thus, \textsc{Check-Id}($\mathcal{J}_4$) lies in $\mathsf{P}$. Moreover, using methods developed in \citep{Chen20}, one can check whether or nor the monoid $\mathcal{J}_4$ satisfies an identity $w\bumpeq w'$ with $|\al(w)|=k$ and $|ww'|=n$ in $O(kn\log(kn))$ time.
\end{remark}

\section{Structure of $\widehat{\mathcal{K}}_4$ and identities of $\mathcal{K}_4$}
\label{sec:wire}

We are ready to attack the identity checking problem for the Kauffman monoid $\mathcal{K}_4$. We approach the problem via a structure property as we did in Section~\ref{sec:j4} for \textsc{Check-Id}($\mathcal{J}_4$). We start with lifting the cutting map $\mathfrak{c}$ from Jones to Kauffman monoids; technically, it is more convenient to lift the map to the extended Kauffman monoid $\widehat{\mathcal{K}}_4$.

Let $\widehat{\mathcal{K}}_4^\flat$ be the ideal of $\widehat{\mathcal{K}}_4$ consisting of all diagrams with at most two $t$-wires; in other words, $\widehat{\mathcal{K}}_4^\flat$ is nothing but the preimage of $\mathcal{J}_4^\flat$ under the erasing map $\xi\mapsto\bar{\xi}$. We define a map $\mathfrak{C}\colon\widehat{\mathcal{K}}_4^\flat\to\widehat{\mathcal{K}}_4^\flat$ as follows: $\mathfrak{C}$ fixes each diagram that has no $t$-wires; if a diagram has two $t$-wires, $\mathfrak{C}$ cuts out the middle of each $t$-wire and then connects the loose ends, forming one new $\ell$-wire, one new $r$-wire, and a new \textbf{negative} circle, which then annihilates with a positive circle provided the initial diagram had positive circles. See Fig.~\ref{fig:cutting} for an illustration.
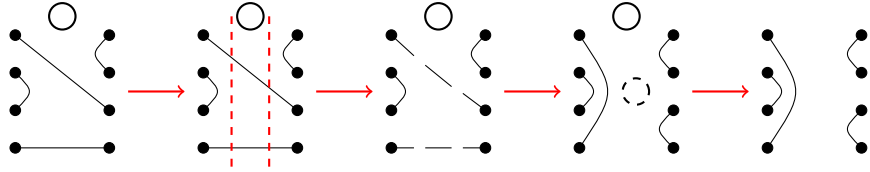
\begin{figure}[htb]
\begin{center}
\begin{tikzpicture}
[scale=0.5]
\foreach \x in {1,3.5} \foreach \y in {0,1,2,3} \filldraw (\x,\y) circle (4pt);
\draw (1,3) -- (3.5,1);
\draw (1,0) -- (3.5,0);
\draw (1,2) .. controls (1.5,1.5) .. (1,1);
\draw (3.5,2) .. controls (3,2.5) .. (3.5,3);
\foreach \x in {2.25,7.25,12.25,17.25} \draw[thick] (\x,3.5) circle (10pt);
\draw[red,thick,->] (4,1.5) to (5.5,1.5);
\foreach \x in {6,8.5} \foreach \y in {0,1,2,3} \filldraw (\x,\y) circle (4pt);
\draw (6,3) -- (8.5,1);
\draw (6,0) -- (8.5,0);
\draw (6,2) .. controls (6.5,1.5) .. (6,1);
\draw (8.5,2) .. controls (8,2.5) .. (8.5,3);
\draw[red,thick,dashed] (6.75,-0.5) -- (6.75,3.5);
\draw[red,thick,dashed] (7.75,-0.5) -- (7.75,3.5);
\draw[red,thick,->] (9,1.5) to (10.5,1.5);
\foreach \x in {11,13.5} \foreach \y in {0,1,2,3} \filldraw (\x,\y) circle (4pt);
\draw (11,3) -- (11.6,2.45);
\draw (13.5,1) -- (12.9,1.45);
\draw (11.9,2.2) -- (12.6,1.63);
\draw (11,0) -- (11.6,0);
\draw (13.5,0) -- (12.9,0);
\draw (11.9,0) -- (12.6,0);
\draw (11,2) .. controls (11.5,1.5) .. (11,1);
\draw (13.5,2) .. controls (13,2.5) .. (13.5,3);
\draw[red,thick,->] (14,1.5) to (15.5,1.5);
\foreach \x in {16,18.5} \foreach \y in {0,1,2,3} \filldraw (\x,\y) circle (4pt);
\draw (16,2) .. controls (16.5,1.5) .. (16,1);
\draw (16,3) .. controls (17,1.5) .. (16,0);
\draw (18.5,0) .. controls (18,0.5) .. (18.5,1);
\draw (18.5,2) .. controls (18,2.5) .. (18.5,3);
\draw[thick,dashed] (17.5,1.5) circle (10pt);
\draw[red,thick,->] (19,1.5) to (20.5,1.5);
\foreach \x in {21,23.5} \foreach \y in {0,1,2,3} \filldraw (\x,\y) circle (4pt);
\draw (21,2) .. controls (21.5,1.5) .. (21,1);
\draw (21,3) .. controls (22,1.5) .. (21,0);
\draw (23.5,0) .. controls (23,0.5) .. (23.5,1);
\draw (23.5,2) .. controls (23,2.5) .. (23.5,3);
\end{tikzpicture}
\end{center}
\caption{The cutting map $\mathfrak{C}$ on $\widehat{\mathcal{K}}_4^\flat$; solid/dashed circles are positive/negative}\label{fig:cutting_new}
\end{figure}

Formally, if a diagram $\xi\in\widehat{\mathcal{K}}_4^\flat$ corresponds to the pair $\bigl(\bar{\xi},\|\xi\|\bigr)\in\mathcal{J}_4^\flat\times\mathbb{Z}$, then $\xi\mathfrak{C}$ is the diagram corresponding to the pair $\bigl(\bar{\xi}\mathfrak{c},\|\xi\|-1\bigr)$ if $\xi$ has two $t$-wires and $\xi\mathfrak{C}=\xi$ otherwise. Observe that $\overline{\xi\mathfrak{C}}=\bar{\xi}\mathfrak{c}$ for every $\xi\in\widehat{\mathcal{K}}_4^\flat$.

\begin{lemma}
\label{lem:cutting_new}
The map $\mathfrak{C}\colon\widehat{\mathcal{K}}_4^\flat\to\widehat{\mathcal{K}}_4^\flat$ is an endomorphism of $\widehat{\mathcal{K}}_4^\flat$.
\end{lemma}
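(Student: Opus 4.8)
The plan is to work in the coordinates $\widehat{\mathcal{K}}_4^\flat\cong\mathcal{J}_4^\flat\times\mathbb{Z}$ supplied by the bijection $\zeta\mapsto(\bar\zeta,\|\zeta\|)$ and to verify $(\xi\eta)\mathfrak{C}=\xi\mathfrak{C}\cdot\eta\mathfrak{C}$ componentwise. For the Jones component this is immediate: since $\overline{\zeta\mathfrak{C}}=\bar\zeta\mathfrak{c}$ for every $\zeta\in\widehat{\mathcal{K}}_4^\flat$ and $\mathfrak{c}$ is an endomorphism by Lemma~\ref{lem:cutting},
\[
\overline{(\xi\eta)\mathfrak{C}}=(\bar\xi\bar\eta)\mathfrak{c}=(\bar\xi\mathfrak{c})(\bar\eta\mathfrak{c})=\overline{\xi\mathfrak{C}}\cdot\overline{\eta\mathfrak{C}}=\overline{\xi\mathfrak{C}\cdot\eta\mathfrak{C}}.
\]
Everything therefore reduces to the circle counts. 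Writing $\tau(\zeta):=1$ if $\zeta$ has two $t$-wires and $\tau(\zeta):=0$ otherwise, the definition of $\mathfrak{C}$ gives $\|\zeta\mathfrak{C}\|=\|\zeta\|-\tau(\zeta)$, so expanding both sides via \eqref{eq:coordinates} turns $\|(\xi\eta)\mathfrak{C}\|=\|\xi\mathfrak{C}\cdot\eta\mathfrak{C}\|$ into the single combinatorial identity
\[
\langle\bar\xi\mathfrak{c},\bar\eta\mathfrak{c}\rangle-\langle\bar\xi,\bar\eta\rangle=\tau(\xi)+\tau(\eta)-\tau(\xi\eta),
\]
which no longer mentions circles of the factors and lives entirely inside $\mathcal{J}_4$.

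To prove this identity I would model the creation of new circles by the \emph{gluing graph} on the four middle points obtained when the right points of $\bar\xi$ are identified with the left points of $\bar\eta$: its edges are the $r$-wires of $\bar\xi$ together with the $\ell$-wires of $\bar\eta$, and a vertex carries a half-edge wherever a $t$-wire reaches the middle. By the circle-formation rule for the product, new circles are exactly the cycles of this graph, so $\langle\bar\xi,\bar\eta\rangle$ is their number, while $\tau(\xi\eta)$ records whether the four terminal half-edges pair up as $\xi$-to-$\eta$ (two composite $t$-wires) or as $\xi$-to-$\xi$ and $\eta$-to-$\eta$ (a composite $\ell$-wire and a composite $r$-wire). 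The effect of $\mathfrak{c}$ on a factor with two $t$-wires is precisely to complete its $r$-wires (respectively $\ell$-wires) to a perfect matching of the four middle points by adjoining the edge that joins its two $t$-wire endpoints; hence $\langle\bar\xi\mathfrak{c},\bar\eta\mathfrak{c}\rangle$ is the number of cycles in a union of two perfect matchings, always $1$ or $2$.

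The bulk of the work, and the main obstacle, is the case analysis for the displayed identity, which I would organize by the pair $(\tau(\xi),\tau(\eta))$, using that in $\mathcal{J}_4^\flat$ a product has two $t$-wires only if both factors do, so $\tau(\xi\eta)\le\min\{\tau(\xi),\tau(\eta)\}$. If $\tau(\xi)=0$ or $\tau(\eta)=0$ the argument is light, since $\mathfrak{c}$ then fixes that factor and inserts at most one extra edge. The delicate case is $\tau(\xi)=\tau(\eta)=1$: here one adds one single edge to each factor, and running through the three relative positions of the two added edges (equal, sharing one point, disjoint) shows that the left-hand side $\langle\bar\xi\mathfrak{c},\bar\eta\mathfrak{c}\rangle-\langle\bar\xi,\bar\eta\rangle$ takes the values $1$, $1$, $2$, matching $\tau(\xi)+\tau(\eta)-\tau(\xi\eta)$, which equals $1$, $1$, $2$ because the product keeps two $t$-wires in the first two positions and loses both in the third. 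I expect the disjoint sub-case to be the crux, where one must reconcile the two fresh circles appearing on the cut side with the product simultaneously losing both its $t$-wires; once this bookkeeping is settled the endomorphism property follows. A fully geometric variant, tracking circles along composite wires as in the proof of Lemma~\ref{lem:cutting}, is also available and has the advantage of extending verbatim to $\widehat{\mathcal{K}}_n^\flat$ for every even $n\ge4$.
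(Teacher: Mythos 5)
Your argument is correct, and its top layer coincides with the paper's: verify the homomorphism property in the coordinates $\bigl(\bar{\zeta},\|\zeta\|\bigr)$, dispose of the Jones component via Lemma~\ref{lem:cutting} and the fact that erasing is a homomorphism, and reduce everything to a circle-count check. Where you genuinely diverge is in how that check is organized. The paper assumes by symmetry that $\xi$ has two $t$-wires and verifies the equality $\|\xi\mathfrak{C}\|+\|\eta\mathfrak{C}\|+\langle\bar{\xi}\mathfrak{c},\bar{\eta}\mathfrak{c}\rangle=\|(\xi\eta)\mathfrak{C}\|$ of \eqref{eq:desired} by explicit arithmetic in three cases keyed to the relation ``$\bar{\xi}$ matches $\bar{\eta}$'', with subcases according to whether $\eta$ has $t$-wires; you instead cancel $\|\xi\|$ and $\|\eta\|$ once and for all, distilling the single symmetric identity $\langle\bar{\xi}\mathfrak{c},\bar{\eta}\mathfrak{c}\rangle-\langle\bar{\xi},\bar{\eta}\rangle=\tau(\xi)+\tau(\eta)-\tau(\xi\eta)$, which you prove by counting cycles in the gluing graph. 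This is a faithful repackaging: for $n=4$ the adjoined edge is the complement of the factor's middle wire, so your trichotomy for the added edges (equal, sharing one point, disjoint) is exactly the paper's Cases 1, 3, 2 in disguise, and your claimed values $1,1,2$ on both sides agree with the paper's computations, the $\tau(\eta)=0$ subcases being covered by the path-closing observation that adjoining the single edge creates exactly one new cycle. What your packaging buys: symmetry in $\xi$ and $\eta$ (no WLOG), elimination of the factors' circle numbers before any case analysis, and a graph-theoretic core that extends to $\widehat{\mathcal{K}}_n^\flat$ for all even $n$ -- going beyond the paper's remark, which lifts only the Jones-level Lemma~\ref{lem:cutting}. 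What the paper's version buys: concreteness (the $117$ pairs, with typical representatives named for each case, make the verification checkable at a glance) and the explicit ``matching'' relation, used there also to argue that $\xi\eta$ acquires a $t$-wire in its Case~3, a point you handle implicitly through the pairing of terminal half-edges. Two small caveats: the value computations in your three positions are asserted rather than worked out (they are routine within your setup and do check out), and the observation that $\langle\bar{\xi}\mathfrak{c},\bar{\eta}\mathfrak{c}\rangle$ is the cycle count of a union of two perfect matchings, hence $1$ or $2$, is special to $n=4$; for the claimed verbatim extension to even $n>4$ one must run the path-closing argument throughout, since the union of matchings can then have more cycles -- which is precisely the ``fully geometric variant'' you mention at the end.
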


\begin{proof}
We have to show that $\xi\mathfrak{C}\cdot\eta\mathfrak{C}=(\xi\eta)\mathfrak{C}$ for arbitrary diagrams $\xi,\eta\in\widehat{\mathcal{K}}_4^\flat$. If both $\xi$ and $\eta$ have no $t$-wires, so does $\xi\eta$, and the required equality clearly holds. Thus, we may assume that at least one of the diagrams has two $t$-wires. Due to the symmetry, it is sufficient to analyze the situation when $\xi$ has two $t$-wires.

In terms of the coordinatization of $\widehat{\mathcal{K}}_4$, the diagram $\xi\mathfrak{C}\cdot\eta\mathfrak{C}$ corresponds to the pair $\bigl(\overline{\xi\mathfrak{C}\cdot\eta\mathfrak{C}},\|\xi\mathfrak{C}\cdot\eta\mathfrak{C}\|\bigr)$ while the pair corresponding to $(\xi\eta)\mathfrak{C}$ is $\bigl(\overline{(\xi\eta)\mathfrak{C}},\|(\xi\eta)\mathfrak{C}\|\bigr)$. The equality of the first entries of these pairs easily follows from Lemma~\ref{lem:cutting}. Indeed,
\begin{align*}
  \overline{\xi\mathfrak{C}\cdot\eta\mathfrak{C}}&=\overline{\xi\mathfrak{C}}\cdot\overline{\eta\mathfrak{C}}=\bar{\xi}\mathfrak{c}\cdot\bar{\eta}\mathfrak{c}&&\text{since $\xi\mapsto\bar{\xi}$ is a homomorphism}\\
  &=(\bar{\xi}\bar{\eta})\mathfrak{c}&&\text{by Lemma~\ref{lem:cutting}}\\
  &=(\overline{\xi\eta})\mathfrak{c}=\overline{(\xi\eta)\mathfrak{C}}&&\text{since $\xi\mapsto\bar{\xi}$ is a homomorphism}.
\end{align*}
Thus, it remains to compute the numbers of circles in $\xi\mathfrak{C}\cdot\eta\mathfrak{C}$ and in $(\xi\eta)\mathfrak{C}$ and to verify that these numbers are equal, that is,
$\|\xi\mathfrak{C}\cdot\eta\mathfrak{C}\|=\|(\xi\eta)\mathfrak{C}\|$. The following aims to present the computation in a compact way.

From~\eqref{eq:coordinates}, we see that $\|\xi\mathfrak{C}\cdot\eta\mathfrak{C}\|=\|\xi\mathfrak{C}\|+\|\eta\mathfrak{C}\|+\langle\overline{\xi\mathfrak{C}},\overline{\eta\mathfrak{C}}\rangle$. Since $\overline{\xi\mathfrak{C}}=\bar{\xi}\mathfrak{c}$ and $\overline{\eta\mathfrak{C}}=\bar{\eta}\mathfrak{c}$, the desired equality can be rewritten as
\begin{equation}
\label{eq:desired}
\|\xi\mathfrak{C}\|+\|\eta\mathfrak{C}\|+\langle\bar{\xi}\mathfrak{c},\bar{\eta}\mathfrak{c}\rangle=\|(\xi\eta)\mathfrak{C}\|.
\end{equation}

We say that a diagram $\gamma\in\mathcal{J}_4^\flat$ \emph{matches} a diagram $\delta\in\mathcal{J}_4^\flat$ if for every $r$-wire $\{i',j'\}$ of $\gamma$, the set $\{i,j\}$ occurs as an $\ell$-wire in $\delta$. (Observe that we do not require the opposite: $\delta$ may have an $\ell$-wire $\{s,t\}$, say, such that $\{s',t'\}$ is not an $r$-wire in $\gamma$.) Clearly, gluing $\{i',j'\}$ with $\{i,j\}$ creates a circle when the product $\gamma\delta$ is being formed.

We split the verification of \eqref{eq:desired} into three cases. Each of these cases covers a certain number of pairs $(\bar{\xi},\bar{\eta})$  amongst $9\times 13=117$ pairs that are subject to checking. (The assumption that $\xi$ has two $t$-wires restricts the choice of $\bar{\xi}$ to the nine diagrams in the upper half of Fig.~\ref{fig:jones} while $\bar{\eta}$ can be any of the 13 diagrams from $\mathcal{J}_4^\flat$.) The reader may find it helpful to trace how the argument of each case works on a typical example; for this, we indicate such examples, after stating the conditions of the cases.

\medskip

\noindent\textbf{\emph{Case 1}}: $\bar{\xi}$ matches $\bar{\eta}$.

Here typical representatives are the pairs $(\bar{h}_1,\bar{h}_1\bar{h}_2)$ and $(\bar{h}_1,\bar{h}_1\bar{h}_3)$.

\smallskip

The condition that $\bar{\xi}$ matches $\bar{\eta}$ means that $\langle\bar{\xi},\bar{\eta}\rangle=1$. Further, it is easy to see that $\bar{\xi}\mathfrak{c}$ matches $\bar{\eta}\mathfrak{c}$ whence $\langle\bar{\xi}\mathfrak{c},\bar{\eta}\mathfrak{c}\rangle=2$. We have $\|\xi\mathfrak{C}\|=\|\xi\|-1$ as $\xi$ has two $t$-wires. If $\eta$ also has two $t$-wires, then $\|\eta\mathfrak{C}\|=\|\eta|-1$. Thus, computing the left hand side of \eqref{eq:desired} yields
\[
\|\xi\mathfrak{C}\|+\|\eta\mathfrak{C}\|+\langle\bar{\xi}\mathfrak{c},\bar{\eta}\mathfrak{c}\rangle=(\|\xi\|-1)+(\|\eta\|-1)+2=\|\xi\|+\|\eta\|.
\]
Besides that, the condition that $\bar{\xi}$ matches $\bar{\eta}$ implies that the $t$-wires of $\xi$ and $\eta$ combine and provide two $t$-wires in $\xi\eta$. Using this and~\eqref{eq:coordinates}, we get
\[
\|(\xi\eta)\mathfrak{C}\|=\|\xi\eta\|-1=\|\xi\|+\|\eta\|+\langle\bar{\xi},\bar{\eta}\rangle-1=\|\xi\|+\|\eta\|+1-1=\|\xi\|+\|\eta\|.
\]
We conclude that the equality \eqref{eq:desired} holds.

Now assume that $\eta$ has no $t$-wires. Then $\|\eta\mathfrak{C}\|=\|\eta|$, whence the left hand side of \eqref{eq:desired} is equal to $\|\xi\|+\|\eta\|+1$. However, in this subcase, the product $\xi\eta$ also omits $t$-wires and $\|(\xi\eta)\mathfrak{C}\|=\|\xi\eta\|=\|\xi\|+\|\eta\|+1$, too. Thus, the equality \eqref{eq:desired} persists.

\medskip

\noindent\textbf{\emph{Case 2}}: $\bar{\xi}$ does not match $\bar{\eta}$ but $\bar{\xi}\mathfrak{c}$ matches $\bar{\eta}\mathfrak{c}$.

Here a typical representative is the pair $(\bar{h}_1,\bar{h}_3)$.

\smallskip

Case 2 is only possible if $\eta$ has two $t$-wires whence $\|\eta\mathfrak{C}\|=\|\eta|-1$. We have $\langle\bar{\xi},\bar{\eta}\rangle=0$ but $\langle\bar{\xi}\mathfrak{c},\bar{\eta}\mathfrak{c}\rangle=2$. Thus, the left hand side of \eqref{eq:desired} is
\[
\|\xi\mathfrak{C}\|+\|\eta\mathfrak{C}\|+\langle\bar{\xi}\mathfrak{c},\bar{\eta}\mathfrak{c}\rangle=(\|\xi\|-1)+(\|\eta\|-1)+2=\|\xi\|+\|\eta\|.
\]
Further, under the conditions of Case~2, $\xi\eta$ cannot possess $t$-wires. Therefore, $\|(\xi\eta)\mathfrak{C}\|=\|\xi\eta\|=\|\xi\|+\|\eta\|$, and the equality \eqref{eq:desired} holds.

\medskip

\noindent\textbf{\emph{Case 3}}: $\bar{\xi}\mathfrak{c}$ does not match $\bar{\eta}\mathfrak{c}$.

Here typical representatives are the pairs $(\bar{h}_1,\bar{h}_2)$ and $(\bar{h}_1,\bar{h}_2\bar{h}_1\bar{h}_3)$.

\smallskip

Since $\bar{\xi}\mathfrak{c}$ does not match $\bar{\eta}\mathfrak{c}$, we have $\langle\bar{\xi}\mathfrak{c},\bar{\eta}\mathfrak{c}\rangle=1$. In addition, $\bar{\xi}$ cannot match $\bar{\eta}$ whence $\langle\bar{\xi},\bar{\eta}\rangle=0$. If $\eta$ has two $t$-wires, $\|\eta\mathfrak{C}\|=\|\eta|-1$ and the left hand side of \eqref{eq:desired} becomes
\[
\|\xi\mathfrak{C}\|+\|\eta\mathfrak{C}\|+\langle\bar{\xi}\mathfrak{c},\bar{\eta}\mathfrak{c}\rangle=(\|\xi\|-1)+(\|\eta\|-1)+1=\|\xi\|+\|\eta\|-1.
\]
If the $r$-wire of $\xi$ is $\{i',j'\}$, the $\ell$-wire of $\eta$ must be $\{j,k\}$ for some $k\ne i$. The set $\{1,2,3,4\}\setminus\{i,j,k\}$ consists of a unique number $h$, say. Then one of the $t$-wires of $\xi$ has $h'$ as its right point while one of the $t$-wires of $\eta$ has $h$ as its left point, and we see that $\xi\eta$ has got a $t$-wire. From this and~\eqref{eq:coordinates}, we compute
\[
\|(\xi\eta)\mathfrak{C}\|=\|\xi\eta\|-1=\|\xi\|+\|\eta\|+\langle\bar{\xi},\bar{\eta}\rangle-1=\|\xi\|+\|\eta\|-1,
\]
whence the equality \eqref{eq:desired} holds.

Finally, consider the subcase when $\eta$ has no $t$-wires. Then $\|\eta\mathfrak{C}\|=\|\eta\|$ and the left hand side of \eqref{eq:desired} becomes $\|\xi\mathfrak{C}\|+\|\eta\mathfrak{C}\|+\langle\bar{\xi}\mathfrak{c},\bar{\eta}\mathfrak{c}\rangle=(\|\xi\|-1)+\|\eta\|+1=\|\xi\|+\|\eta\|$. Of course, if $\eta$ omits $t$-wires, so does $\xi\eta$, whence  $\|(\xi\eta)\mathfrak{C}\|=\|\xi\eta\|=\|\xi\|+\|\eta\|$, and the equality \eqref{eq:desired} holds again.\qed
\end{proof}

\begin{remark}
When we introduced the extended Kauffman monoids $\widehat{\mathcal{K}}_n$, we said that they are easier to deal with, compared with the `standard' Kauffman monoids $\mathcal{K}_n$. Lemma~\ref{lem:cutting_new} provides a supporting evidence for this claim. Indeed, it is not clear if the semigroup $\mathcal{K}_4^\flat$ consisting of diagrams with at most two $t$-wires from $\mathcal{K}_4$ admits any `nice' endomorphism similar to the cutting map $\mathfrak{C}\colon\widehat{\mathcal{K}}_4^\flat\to\widehat{\mathcal{K}}_4^\flat$. We mention in passing that working with the monoid $\widehat{\mathcal{K}}_3$ rather than $\mathcal{K}_3$ would have somewhat simplified also the proofs of the main results in~\citep{Chen20}.
\end{remark}

We proceed with an analogue of Proposition~\ref{prop:structure}. Recall that $\mathbb{C}_\infty$ stands for the infinite cyclic group. As above, we fix a generator $c$ of $\mathbb{C}_\infty$ and denote by $e$ the identity element of the group. Now consider two \Rms{s} over $\mathbb{C}_\infty$:
\begin{itemize}
\item $\mathcal{RC}_2:=\mathcal{M}\left(\{1,2\},\mathbb{C}_\infty,\{1,2\};\Bigl(\begin{smallmatrix}c^2&c\\c&c^2\end{smallmatrix}\Bigr)\right)$,
\item $\mathcal{MC}_3:=\mathcal{M}^0\left(\{1,2,3\},\mathbb{C}_\infty,\{1,2,3\};\Bigl(\begin{smallmatrix}c&e&0\\e&c&e\\0&e&c\end{smallmatrix}\Bigr)\right)$.
\end{itemize}

\begin{proposition}
\label{prop:structure_new}
The semigroup $\widehat{\mathcal{K}}_4^\flat$ is isomorphic to a subdirect product of the \Rms{s} $\mathcal{RC}_2$ and $\mathcal{MC}_3$.
\end{proposition}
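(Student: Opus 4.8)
The plan is to replay the argument of Proposition~\ref{prop:structure}, now using the endomorphism $\mathfrak{C}$ supplied by Lemma~\ref{lem:cutting_new} in place of $\mathfrak{c}$, and then to identify the two resulting factors by an explicit circle count. First I would note that the image of $\mathfrak{C}$ is precisely the set $\mathcal{I}$ of all diagrams in $\widehat{\mathcal{K}}_4^\flat$ having no $t$-wires, that is, the preimage of the ideal $\mathcal{I}_4$ under the erasing map $\xi\mapsto\bar\xi$. Since $\mathfrak{C}$ fixes every $t$-wire-free diagram, it is a retraction; and since a product with at least one $t$-wire-free factor is again $t$-wire-free (a $t$-wire in a product requires $t$-wires reaching in from both factors), the set $\mathcal{I}$ is an ideal of $\widehat{\mathcal{K}}_4^\flat$. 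Lemma~\ref{lem:retract} then yields that $\widehat{\mathcal{K}}_4^\flat$ is isomorphic to a subdirect product of $\mathcal{I}$ with the Rees quotient $\widehat{\mathcal{K}}_4^\flat/\mathcal{I}$, and both projections are automatically surjective.

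It remains to identify these two factors with $\mathcal{RC}_2$ and $\mathcal{MC}_3$, and this is where the real content lies. For $\mathcal{I}$ I would use the coordinatization $\xi\leftrightarrow(\bar\xi,\|\xi\|)\in\mathcal{J}_4^\flat\times\mathbb{Z}$ together with the product formula~\eqref{eq:coordinates}. The four $t$-wire-free diagrams form the $2\times2$ rectangular band $\mathcal{I}_4$, indexed by the left pattern $i\in\{1,2\}$ and the right pattern $\lambda\in\{1,2\}$ (the two non-crossing matchings of four points), while the circle count ranges over $\mathbb{Z}\cong\mathbb{C}_\infty$. The candidate isomorphism onto $\mathcal{RC}_2$ is $\xi\mapsto(i,c^{\|\xi\|},\lambda)$, and checking it is a homomorphism reduces, via~\eqref{eq:coordinates}, to verifying that $c^{\langle\bar\xi,\bar\eta\rangle}$ equals the sandwich entry $p_{\lambda j}$. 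A short cycle count on the glued middle region does this: the two matchings close into two separate circles when the right pattern of $\bar\xi$ equals the left pattern of $\bar\eta$ (giving $\langle\bar\xi,\bar\eta\rangle=2$), and into a single $4$-cycle otherwise (giving $\langle\bar\xi,\bar\eta\rangle=1$), which is exactly the matrix $\Bigl(\begin{smallmatrix}c^2&c\\c&c^2\end{smallmatrix}\Bigr)$.

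For the Rees quotient I would reuse the $3\times3$ indexing from Proposition~\ref{prop:structure}: the nine two-$t$-wire diagrams are indexed by the adjacent pair $\{i,i+1\}$ forming the $\ell$-wire (row $i$) and the adjacent pair $\{j',(j+1)'\}$ forming the $r$-wire (column $j$), and I would send the class of such a diagram with circle count $m$ to $(i,c^m,j)$ and the zero to $0$. The ``diagram part'' of any product is already governed by $\mathcal{M}_3$, so the positions of the zero entries (namely $(\lambda,j)\in\{(1,3),(3,1)\}$, where the product acquires no $t$-wire and therefore falls into $\mathcal{I}$) carry over verbatim, and in every nonzero case the product remains a two-$t$-wire diagram in row $i$, column $\mu$. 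Thus only the circle count of the nonzero products needs checking: the single $r$-wire of $\bar\xi$ and the single $\ell$-wire of $\bar\eta$ close into a circle precisely when they sit on the same adjacent pair, i.e.\ when $\lambda=j$, contributing $c$ on the diagonal, whereas for $|\lambda-j|=1$ no circle arises and the entry is $e$. This is exactly the matrix $\Bigl(\begin{smallmatrix}c&e&0\\e&c&e\\0&e&c\end{smallmatrix}\Bigr)$, so the Rees quotient is isomorphic to $\mathcal{MC}_3$. Combining the two isomorphisms with Lemma~\ref{lem:retract} completes the argument.

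I expect the main obstacle to be the bookkeeping of circles — in particular, keeping ``number of circles created'' cleanly separated from ``number of surviving $t$-wires'', and making sure the $\mathbb{C}_\infty$-exponents land exactly on the prescribed sandwich entries rather than off by a constant shift. Fortunately, most of this ground is already cleared by the case analysis in the proof of Lemma~\ref{lem:cutting_new}, which computes the relevant quantities $\langle\bar\xi,\bar\eta\rangle$ and $\langle\bar\xi\mathfrak{c},\bar\eta\mathfrak{c}\rangle$ in every configuration.
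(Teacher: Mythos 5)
Your proposal is correct and follows essentially the same route as the paper's proof: retract via $\mathfrak{C}$ onto the $t$-wire-free ideal, apply Lemma~\ref{lem:retract}, and identify the ideal and the Rees quotient with $\mathcal{RC}_2$ and $\mathcal{MC}_3$ through the coordinatization $\xi\mapsto\bigl(\bar{\xi},\|\xi\|\bigr)$ and the product formula~\eqref{eq:coordinates}. The only difference is that you actually carry out the circle-counting verification the paper declares immediate, and your counts are right (your $3\times3$ indexing reverses the paper's row/column labelling, but the sandwich matrix is invariant under simultaneously reversing rows and columns, so nothing is affected).
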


\begin{proof}
By the definition of the map $\mathfrak{C}\colon\widehat{\mathcal{K}}_4^\flat\to\widehat{\mathcal{K}}_4^\flat$, its image is the set $\widehat{\mathcal{I}}_4$ consisting of the diagrams in $\widehat{\mathcal{K}}_4^\flat$ that have no $t$-wires. Since $\mathfrak{C}$ fixes each diagram in $\widehat{\mathcal{I}}_4$ and is an endomorphism by Lemma~\ref{lem:cutting_new}, $\mathfrak{C}$ is a retraction. Since $\widehat{\mathcal{I}}_4$ is an ideal of $\widehat{\mathcal{K}}_4^\flat$, Lemma~\ref{lem:retract} applies, providing a decomposition of $\widehat{\mathcal{K}}_4^\flat$ into a subdirect product of $\widehat{\mathcal{I}}_4$ with the Rees quotient $\widehat{\mathcal{K}}_4^\flat/\widehat{\mathcal{I}}_4$.

It remains to show that $\widehat{\mathcal{I}}_4$ is isomorphic to $\mathcal{RC}_2$ and $\widehat{\mathcal{K}}_4^\flat/\widehat{\mathcal{I}}_4$ is isomorphic to $\mathcal{MC}_3$. Both isomorphisms are easy to describe in terms
of the coordinatization of diagrams from $\widehat{\mathcal{K}}_4$ by pairs from $\mathcal{J}_4\times\mathbb{Z}$. If $\eta\in\widehat{\mathcal{I}}_4$ corresponds to the pair $(\bar{\eta},m)\in\mathcal{J}_4\times\mathbb{Z}$ and the diagram $\bar{\eta}$ occurs in the $i$-th row and $j$-th column of the $2\times 2$-matrix in the lower half of Fig.~\ref{fig:jones}, then $\eta$ is sent to the triple $(i,c^m,j)\in\mathcal{RC}_2$. Similarly, if $\xi\in\widehat{\mathcal{K}}_4^\flat\setminus\widehat{\mathcal{I}}_4$ corresponds to the pair $(\bar{\xi},n)\in\mathcal{J}_4\times\mathbb{Z}$ and the diagram $\bar{\xi}$ occurs in the $k$-th row and $\ell$-th column of the of the $3\times 3$-matrix in the upper half of Fig.~\ref{fig:jones}, then $\xi$ is sent to the triple $(k,c^n,\ell)\in\mathcal{MC}_3$. Finally, the zero of the Rees quotient $\widehat{\mathcal{K}}_4^\flat/\widehat{\mathcal{I}}_4$ is sent to $0\in\mathcal{MC}_3$. Thus, we have got a bijection between $\widehat{\mathcal{I}}_4$ and $\mathcal{RC}_2$, as well as a bijection between $\widehat{\mathcal{K}}_4^\flat/\widehat{\mathcal{I}}_4$ and $\mathcal{MC}_3$. The verification that these bijections constitute semigroup isomorphisms is immediate.\qed
\end{proof}

Recall the description of the identities of $\mathcal{K}_3$ from~\citep{Chen20}.

\begin{theorem}
\label{thm:description}
An identity $w\bumpeq w'$ holds in the Kauffman monoid $\mathcal{K}_3$ if and only if $\al(w)=\al(w')$ and, for each $Y\subset\al(w)$, the words $u:=w_Y$ and $u':=w'_Y$ satisfy the following three conditions:
\begin{itemize}
\item[\emph{(a)}] the first letter of $u$ is the same as the first letter of $u'$;
\item[\emph{(b)}] the last letter of $u$ is the same as the last letter of $u'$;
\item[\emph{(c)}] for each word of length $2$, the number of its occurrences in $u$ is the same as the number of its occurrences in $u'$.\qed
\end{itemize}
\end{theorem}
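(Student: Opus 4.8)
The plan is to establish Theorem~\ref{thm:description} by the same structural route that the paper develops for $\widehat{\mathcal{K}}_4$, transported to the odd case $n=3$. Since $\mathcal{K}_3$ embeds in $\widehat{\mathcal{K}}_3$ and the two monoids turn out to be equationally equivalent, I would work inside $\widehat{\mathcal{K}}_3\cong\mathcal{J}_3\times\mathbb{Z}$, where the infinite cyclic group $\mathbb{C}_\infty$ of signed circle counts is available. Recall that $\mathcal{J}_3$ has $\frac15\binom{6}{3}=5$ diagrams: the identity, which has three $t$-wires, and four diagrams with exactly one $t$-wire (for $n=3$ the number of $t$-wires is always odd). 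A parity-and-tracing argument shows that the product of two one-$t$-wire diagrams again has exactly one $t$-wire, whose left endpoint is inherited from the left factor and whose right endpoint from the right factor; hence the four one-$t$-wire diagrams, indexed by pairs (left endpoint, right endpoint) in $\{1,3\}\times\{1',3'\}$, form a $2\times2$ rectangular band. Consequently, via the coordinatization \eqref{eq:coordinates}, the ideal $\widehat{\mathcal{K}}_3^\flat$ of all diagrams with fewer than three $t$-wires is a \Rms\ over $\mathbb{C}_\infty$.

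To pin down the sandwich matrix I would compute the circle cross-terms $\langle\cdot,\cdot\rangle$: when the right $t$-endpoint of the first factor and the left $t$-endpoint of the second factor occupy the same position, the leftover $r$-wire and $\ell$-wire are glued onto the same pair of points and exactly one circle appears; otherwise no circle is created. This yields $\widehat{\mathcal{K}}_3^\flat\cong\mathcal{M}(\{1,2\},\mathbb{C}_\infty,\{1,2\};\Bigl(\begin{smallmatrix}c&e\\e&c\end{smallmatrix}\Bigr))$. Now Proposition~\ref{prop:rms} applies: its `if' part immediately gives that $\widehat{\mathcal{K}}_3^\flat$ satisfies every identity meeting conditions (a)--(c), and for the converse I would re-run the witnessing substitutions of Proposition~\ref{prop:rms} adapted to this particular matrix (for each prospective violation of (a), (b), or (c), choosing indices that route the relevant adjacency through a $c$-entry and all others through $e$-entries). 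Thus $u\bumpeq u'$ holds in $\widehat{\mathcal{K}}_3^\flat$ precisely when (a)--(c) hold for $u,u'$.

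It remains to lift this from the ideal to the monoid and then back to $\mathcal{K}_3$. The complement $\widehat{\mathcal{K}}_3\setminus\widehat{\mathcal{K}}_3^\flat$ consists of the three-$t$-wire diagrams $(\mathrm{id},m)$, which form the group of units $\cong\mathbb{Z}$ and act on the ideal transparently, merely shifting circle counts and creating no new circles. Classifying a substitution $\varphi$ by the set $Y$ of letters it sends to units, I would argue that substitutions sending every letter to a unit force the balancedness $\occ_x(w)=\occ_x(w')$ for all $x$ (equivalently $\al(w)=\al(w')$ with equal multiplicities, cf.\ Lemma~\ref{lem:balanced}), while for a general $\varphi$ the value $w\varphi$ depends only on the ideal word $w_Y$ up to a circle-shift determined by the unit-letters. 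Hence $w\bumpeq w'$ holds in $\widehat{\mathcal{K}}_3$ iff $\al(w)=\al(w')$ and $w_Y\bumpeq w'_Y$ holds in $\widehat{\mathcal{K}}_3^\flat$ for every $Y\subset\al(w)$, which by the previous paragraph is exactly the condition of Theorem~\ref{thm:description}. Finally, to pass from $\widehat{\mathcal{K}}_3$ to $\mathcal{K}_3$, I would first note that every identity of $\mathcal{K}_3$ is balanced (restrict to the central copy of $\mathbb{N}_0$ given by the circles) and then use a shift argument: raising all circle counts of a $\widehat{\mathcal{K}}_3$-substitution by a large constant $N$ yields a $\mathcal{K}_3$-substitution whose two sides differ from the original ones by $N|w|=N|w'|$ in the circle coordinate, so equality in $\mathcal{K}_3$ transfers back to $\widehat{\mathcal{K}}_3$, making the two monoids equationally equivalent.

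The main obstacle I anticipate is the lifting step over a nontrivial group of units: the clean Lemma~\ref{lem:monoid} covers only the adjunction of a single identity, whereas here one must show that the whole unit group $\mathbb{Z}$ acts transparently on the completely simple ideal so that the monoid identity problem genuinely decouples into the balancedness of the unit-letters and a family of \Rms\ identities $w_Y\bumpeq w'_Y$ ranging over all $Y$. Verifying the `only if' half of the \Rms\ characterization for the concrete matrix $\Bigl(\begin{smallmatrix}c&e\\e&c\end{smallmatrix}\Bigr)$, together with the circle bookkeeping that produces that matrix, are the routine-but-delicate computations underpinning it.
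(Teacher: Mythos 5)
You should first note that the paper does not prove Theorem~\ref{thm:description} at all: it imports the result from \citep{Chen20} (hence the \qed{} in the statement), so the only in-paper benchmark is the analogous machinery developed for $\widehat{\mathcal{K}}_4$ in Sections~\ref{sec:rees}--\ref{sec:wire}. Measured against that, your architecture is sound and is in fact exactly the simplification the authors themselves advertise in their remark that ``working with the monoid $\widehat{\mathcal{K}}_3$ rather than $\mathcal{K}_3$ would have somewhat simplified also the proofs of the main results in~\citep{Chen20}.'' Your structural claims check out: every non-unit diagram of $\widehat{\mathcal{K}}_3$ has exactly one $t$-wire (so, unlike the $n=4$ case, no cutting retraction or subdirect decomposition is needed --- the ideal $\widehat{\mathcal{K}}_3^\flat$ is already completely simple), the circle bookkeeping does give $\widehat{\mathcal{K}}_3^\flat\cong\mathcal{M}\bigl(\{1,2\},\mathbb{C}_\infty,\{1,2\};\bigl(\begin{smallmatrix}c&e\\e&c\end{smallmatrix}\bigr)\bigr)$, the decoupling over the central unit group $\langle c,d\rangle\cong\mathbb{Z}$ works just as in the proof of Theorem~\ref{thm:equivalence}, and your $c^N$-shift argument correctly transfers identities from $\mathcal{K}_3$ back to $\widehat{\mathcal{K}}_3$ (balancedness, obtained from the central copy of $\langle c\rangle$, legitimizes cancelling $c^{N|w|}$). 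A cosmetic slip: $|\mathcal{J}_3|=\frac{1}{4}\binom{6}{3}=5$, not $\frac{1}{5}\binom{6}{3}$.

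There is, however, one step that fails as you describe it: the necessity of condition (c) for your particular sandwich matrix. Under a substitution $x\mapsto(i(x),c^{a_x},\lambda(x))$, the exponent of $c$ contributed by the sandwich entries is $\sum\occ_{xx'}(u)$ taken over the pairs with $\lambda(x)=i(x')$, and for the matrix $\bigl(\begin{smallmatrix}c&e\\e&c\end{smallmatrix}\bigr)$ this set of pairs is always of the form $(A\times B)\cup(A^c\times B^c)$, where $A=\lambda^{-1}(1)$ and $B=i^{-1}(1)$. You cannot in general arrange that this set meets the adjacencies of $u$ in exactly the single pair $(y,z)$: for instance, if $u$ contains the factors $yt$, $st$, and $sz$ with $t\ne z$ and $s\ne y$, then $y\in A$ and $z\in B$ force $t\notin B$ (from $yt$), hence $s\in A$ (from $st$), while $sz$ forces $s\notin A$ --- a contradiction. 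So ``routing the relevant adjacency through a $c$-entry and all others through $e$-entries'' is impossible for such words, unlike for the paper's asymmetric matrix $\bigl(\begin{smallmatrix}e&c\\e&e\end{smallmatrix}\bigr)$ in Proposition~\ref{prop:rms}, where a single substitution does isolate $\occ_{yz}$. The gap is fixable by aggregating over all substitutions instead: put $D_{xx'}:=\occ_{xx'}(u)-\occ_{xx'}(u')$; validity of $u\bumpeq u'$ yields $\mathbf{1}_A^{T}D\,\mathbf{1}_B+\mathbf{1}_{A^c}^{T}D\,\mathbf{1}_{B^c}=0$ for all subsets $A,B$ of the letters; taking $A=\emptyset$ (respectively $B=\emptyset$) shows all column (respectively row) sums of $D$ vanish, whereupon inclusion--exclusion reduces the relation to $2\,\mathbf{1}_A^{T}D\,\mathbf{1}_B=0$, and singletons $A=\{y\}$, $B=\{z\}$ give $D=0$, i.e., condition (c). With that patch, your proposal is a complete and correct proof, by a route genuinely cleaner than the one in \citep{Chen20}, which works inside $\mathcal{K}_3\cong\mathcal{J}_3\times\mathbb{N}_0$ without the invertible circle.
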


We are ready to state and to prove our main result.

\begin{theorem}
\label{thm:equivalence}
The Kauffman monoids $\mathcal{K}_3$ and $\mathcal{K}_4$ are equationally equivalent.
\end{theorem}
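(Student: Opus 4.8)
The plan is to prove that $\mathcal{K}_4$ satisfies an identity $w\bumpeq w'$ if and only if $\al(w)=\al(w')$ and, for each $Y\subset\al(w)$, the words $u:=w_Y$ and $u':=w'_Y$ satisfy the conditions (a), (b), (c) of Theorem~\ref{thm:description}. Since these are exactly the conditions characterizing the identities of $\mathcal{K}_3$, this yields the claimed equational equivalence. Throughout I would work inside the extended monoid $\widehat{\mathcal{K}}_4=\mathcal{J}_4\times\mathbb{Z}$, which contains $\mathcal{K}_4=\mathcal{J}_4\times\mathbb{N}_0$ as a submonoid. Two structural facts drive the argument: the group of units of $\widehat{\mathcal{K}}_4$ is the \emph{central} infinite cyclic group $\mathbb{C}_\infty=\langle c\rangle$ (centrality follows from \eqref{eq:coordinates}, or as in the derivation of $dh_i=h_id$), and its complement $\widehat{\mathcal{K}}_4^\flat$ is an ideal which, by Proposition~\ref{prop:structure_new}, is a subdirect product of the \Rmss\ $\mathcal{RC}_2$ and $\mathcal{MC}_3$ over the abelian group $\mathbb{C}_\infty$.

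For the ``counterintuitive'' implication (the conditions force the identity in $\mathcal{K}_4$), suppose $w$ and $w'$ satisfy the conditions for every $Y\subset\al(w)$. By the ``if'' part of Proposition~\ref{prop:rms}, each identity $w_Y\bumpeq w'_Y$ holds in every \Rms\ over an abelian group, in particular in $\mathcal{RC}_2$ and $\mathcal{MC}_3$, and hence in their subdirect product $\widehat{\mathcal{K}}_4^\flat$. It then remains to lift these ``ideal'' identities up to $\widehat{\mathcal{K}}_4$. Here I would establish a lifting lemma in the spirit of Lemma~\ref{lem:monoid}: because the units $\mathbb{C}_\infty$ are central and the identity is balanced (by Lemma~\ref{lem:balanced}), any substitution $\varphi\colon X\to\widehat{\mathcal{K}}_4$ splits the letters into a set $U$ mapped into $\mathbb{C}_\infty$ and the set $F=\al(w)\setminus U$ mapped into the ideal; pulling the central factors to the front gives $w\varphi=c^{K}\,(w_U)\varphi$ with $K=\sum_{x\in U}m_x\occ_x(w)$, and balancedness makes the exponents for $w$ and $w'$ coincide. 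Thus $w\varphi=w'\varphi$ reduces to $(w_U)\varphi=(w'_U)\varphi$, which holds because $w_U\bumpeq w'_U$ is one of our ideal identities (the case $U=\al(w)$ being trivial). Hence $w\bumpeq w'$ holds in $\widehat{\mathcal{K}}_4$, and a fortiori in the submonoid $\mathcal{K}_4$.

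For the reverse (``expected'') implication, suppose $w\bumpeq w'$ holds in $\mathcal{K}_4$. Substituting the $Y$-letters by the identity element shows that each $w_Y\bumpeq w'_Y$ holds in $\mathcal{K}_4$ as well, so it suffices to show that any identity $u\bumpeq u'$ valid in $\mathcal{K}_4$ satisfies (a), (b), (c). The substitution sending one letter to $c$ and all others to $1$ yields balancedness; applying the erasing homomorphism onto $\mathcal{J}_4$ and invoking Theorem~\ref{thm:jones description} yields (a), (b), and the occurrence condition (c$'$). The only remaining task is to upgrade (c$'$) to the counting condition (c), and this is exactly where the circle-counting becomes indispensable. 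I would realize forcing substitutions in the spirit of Proposition~\ref{prop:rms} entirely inside $\mathcal{K}_4$ via the isomorphism $\widehat{\mathcal{I}}_4\cong\mathcal{RC}_2$: since $\widehat{\mathcal{I}}_4$ is a subsemigroup, substitutions valued in diagrams with no $t$-wires and no circles keep all products in $\widehat{\mathcal{I}}_4\cap\mathcal{K}_4$, and the number of circles of $u\varphi$ becomes an affine function of the numbers $\occ_v(u)$. Choosing the indices so as to single out a given length-$2$ word $v$, the spurious contributions cancel against one another once (a), (b) and balancedness are known, leaving $\occ_v(u)=\occ_v(u')$. (Alternatively, this implication is immediate from the standard geometric embedding $\mathcal{K}_3\hookrightarrow\mathcal{K}_4$ obtained by adjoining a horizontal through-strand $\{4,4'\}$ to every diagram, which realizes $\mathcal{K}_3$ as a submonoid of $\mathcal{K}_4$.)

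I expect the lifting lemma of the second paragraph to be the main obstacle: one must carefully handle substitutions that send some letters to central units and the remaining letters into the ideal, and check that this ``mixed'' case genuinely reduces, through centrality and balancedness, to an identity of $\widehat{\mathcal{K}}_4^\flat$. The second delicate point is the passage from (c$'$) to (c). Conceptually it merely records that the Kauffman monoid counts circles whereas the Jones monoid only detects their presence, but making the bookkeeping precise---verifying that every term other than $\occ_v$ cancels---requires the full strength of conditions (a), (b) together with balancedness, and is the step that would most reward a clean choice of witnessing substitution.
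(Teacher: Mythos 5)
Your proposal is correct and follows essentially the same route as the paper: the substantive direction is proved inside $\widehat{\mathcal{K}}_4$ exactly as the paper does it, by splitting a substitution into letters valued in the central group of units and letters valued in the ideal $\widehat{\mathcal{K}}_4^\flat$, matching the central exponents via balancedness (Lemma~\ref{lem:balanced}), and disposing of the ideal part via Propositions~\ref{prop:rms} and~\ref{prop:structure_new}. For the converse the paper just invokes the embedding $\mathcal{K}_3\hookrightarrow\mathcal{K}_4$ together with Theorem~\ref{thm:description}---precisely the one-line alternative you note parenthetically---so your primary circle-counting argument upgrading (c$'$) to (c), though it can be made to work, is an unnecessary detour.
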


\begin{proof}
The monoid $\mathcal{K}_3$ naturally embeds into $\mathcal{K}_4$: the submonoid of $\mathcal{K}_4$ generated by the hooks $h_1,h_2$ and the circle $c$ is isomorphic to $\mathcal{K}_3$. Therefore, every identity that holds in $\mathcal{K}_4$ must hold in $\mathcal{K}_3$. In order to show the converse, we employ Theorem~\ref{thm:description}. Namely, we are going to verify that every identity $w\bumpeq w'$ that satisfies the conditions of Theorem~\ref{thm:description} holds in the extended Kauffman monoid $\widehat{\mathcal{K}}_4$. Since $\mathcal{K}_4$ embeds into $\widehat{\mathcal{K}}_4$, this will prove the equational equivalence of $\mathcal{K}_3$ with $\mathcal{K}_4$, and moreover, with $\widehat{\mathcal{K}}_4$.

We have to check that $w\varphi=w'\varphi$ for an arbitrary homomorphism $\varphi\colon X^+\to\widehat{\mathcal{K}}_4$. Clearly, $\widehat{\mathcal{K}}_4$ is the disjoint union of its group of units $H$ generated (as a semigroup) by $c$ and $d$ and the ideal $\widehat{\mathcal{K}}_4^\flat$. Let $Y:=\{y\in\al(w)\mid y\varphi\in H\}$. Since $cd=dc=1$, we write $c^{-1}$ for $d$, and for each $y\in Y$, we let $k_y\in\mathbb{Z}$ be such that $y\varphi=c^{k_y}$. Denote the sum $\sum_{y\in Y}\occ_y(w)k_y$ by $N_Y$. By Lemma~\ref{lem:balanced} we have $\occ_y(w)=\occ_y(w')$, whence the sum $\sum_{y\in Y}\occ_y(w')k_y$ is also equal to $N_Y$. If $Y=\al(w)$, we have $w\varphi=c^{N_Y}=w'\varphi$, and we are done.

Consider the situation where $Y\subset\al(w)$. Using the fact that the generators $c$ and $d$ commute with the hooks $h_1,h_2,h_3$, we can represent $w\varphi$ and $w'\varphi$ as $c^{N_Y}w_Y\varphi$ and $c^{N_Y}w'_Y\varphi$ respectively. Therefore it remains to verify that $w_Y\varphi=w'_Y\varphi$, and for this, it suffices to show that the identity $u\bumpeq u'$ with $u:=w_Y$ and $u':=w'_Y$ holds in the semigroup $\widehat{\mathcal{K}}_4^\flat$.
Since the words $u$ and $u'$ satisfy the conditions (a)--(c), the identity $u\bumpeq u'$ holds in every \Rms\ over an abelian group by Proposition~\ref{prop:rms}. In particular, $u\bumpeq u'$ holds in the semigroups  $\mathcal{RC}_2$ and $\mathcal{MC}_3$, and by Proposition~\ref{prop:structure_new} it holds also in $\widehat{\mathcal{K}}_4^\flat$, as required.\qed
\end{proof}

\begin{remark}
The result of Theorem~\ref{thm:equivalence} was unexpected for us since, informally speaking, the monoid $\mathcal{K}_4$ appeared to be much more complicated than its submonoid $\mathcal{K}_3$ and it was rather hard to believe that the $\mathcal{K}_4$ could inherit all identities of the submonoid. Observe that the Jones monoids $\mathcal{J}_3$ and $\mathcal{J}_4$ are not equationally equivalent: $\mathcal{J}_3$ satisfies the identity $x^2\bumpeq x$ that clearly fails in $\mathcal{J}_4$. Moreover, it follows from a result by \cite{Tr88} that the identities of  $\mathcal{J}_3$ and $\mathcal{J}_4$ are very different in a sense: there are uncountably many pairwise equationally non-equivalent semigroups whose identity sets strictly contain the identity set of $\mathcal{J}_4$ and are strictly contained in that of $\mathcal{J}_3$.  Theorem~\ref{thm:equivalence} makes a strong contrast to these facts.
\end{remark}

Using a suitable reformulation of Theorem~\ref{thm:description}, \citet[Section~2]{Chen20} have developed an algorithm that, given an identity $w\bumpeq w'$ with $|\al(w)|=k$ and $|ww'|=n$, verifies whether or nor the identity holds in the monoid $\mathcal{K}_3$ in $O(kn\log(kn))$ time. Theorem~\ref{thm:equivalence} implies that this algorithm can be used to check identities  in the monoid $\mathcal{K}_4$. In particular, we have the following fact.

\begin{corollary}
The problem \textsc{Check-Id}($\mathcal{K}_4$) lies in $\mathsf{P}$.\qed
\end{corollary}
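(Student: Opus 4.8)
The plan is to obtain the corollary as an immediate consequence of Theorem~\ref{thm:equivalence} together with the quasilinear identity-checking algorithm for $\mathcal{K}_3$ recorded just above it from~\citep{Chen20}. First I would recall that, by the very meaning of equational equivalence, two equationally equivalent semigroups satisfy exactly the same identities; hence an identity $w\bumpeq w'$ holds in $\mathcal{K}_4$ if and only if it holds in $\mathcal{K}_3$. Consequently the decision problems \textsc{Check-Id}($\mathcal{K}_4$) and \textsc{Check-Id}($\mathcal{K}_3$) return the same YES/NO answer on every instance, so any algorithm that solves one of them solves the other.

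Next I would invoke the algorithm of \citet[Section~2]{Chen20}, which, given an identity $w\bumpeq w'$ with $|\al(w)|=k$ and $|ww'|=n$, decides whether the identity holds in $\mathcal{K}_3$ in time $O(kn\log(kn))$. Since $k\le n$, this bound is at most $O(n^2\log n)$, which is polynomial in the size $n$ of the input identity. Feeding an arbitrary instance of \textsc{Check-Id}($\mathcal{K}_4$) to this algorithm therefore yields the correct answer for $\mathcal{K}_4$ within polynomial time, and we conclude that \textsc{Check-Id}($\mathcal{K}_4$)$\in\mathsf{P}$.

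There is essentially no obstacle at this stage: all the genuine work has already been discharged in establishing Theorem~\ref{thm:equivalence}, which reduces the infinite, geometrically intricate monoid $\mathcal{K}_4$ to the combinatorially well-understood monoid $\mathcal{K}_3$. The only two points that merit a sentence of care are, first, checking that equational equivalence really does transfer membership in $\mathsf{P}$---it does, precisely because the two problems coincide as maps from inputs to $\{\text{YES},\text{NO}\}$---and, second, confirming that the cited running-time bound is measured in the input size $|ww'|$, as demanded by our convention for \textsc{Check-Id}, rather than in some auxiliary parameter. Both verifications are routine, so the corollary follows at once.
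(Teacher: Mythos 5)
Your proposal is correct and follows exactly the paper's argument: the corollary is immediate from Theorem~\ref{thm:equivalence} combined with the $O(kn\log(kn))$ identity-checking algorithm for $\mathcal{K}_3$ from \citep{Chen20}, since equationally equivalent monoids yield identical instances-to-answers maps for \textsc{Check-Id}. Your added care about measuring the running time in $|ww'|$ and noting $k\le n$ is sound but routine, just as the paper treats it.
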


It has been shown in \citep[Proposition 6]{Chen20} that the equational equivalence of $\mathcal{K}_3$ and $\mathcal{K}_4$ does not extend to the monoid $\mathcal{K}_5$. For instance, the identity $x^2yx\bumpeq xyx^2$, which holds in $\mathcal{K}_3$ and $\mathcal{K}_4$ by Theorems~\ref{thm:description} and~\ref{thm:equivalence}, fails in $\mathcal{K}_5$ under the substitution $x\mapsto h_1h_2h_3,\ y\mapsto h_4$. The proof in \citep{Chen20} relies on a normal form for the elements of the monoid $\mathcal{K}_n$ suggested by \citet{Jo83}. Fig.~\ref{fig:k5} illustrates this example; in fact, Fig.~\ref{fig:k5} can be treated as an alternative argument showing that the identity $x^2yx\bumpeq xyx^2$ fails in $\mathcal{K}_5$ in a way that complies with the geometric approach of the present paper.

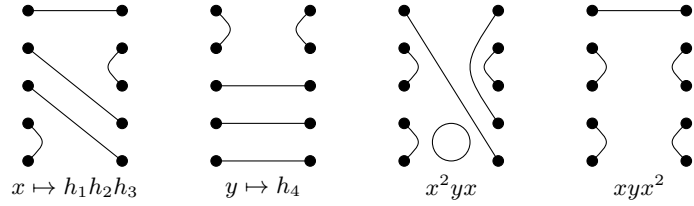
\begin{figure}[htb]
\begin{center}
\begin{tikzpicture}
[scale=0.5]
\draw (2.25,-0.7) node{$x\mapsto h_1h_2h_3$};
\foreach \x in {1,3.5} \foreach \y in {0,1,2,3,4} \filldraw (\x,\y) circle (4pt);
\draw (1,4) -- (3.5,4);
\draw (1,3) -- (3.5,1);
\draw (1,2) -- (3.5,0);
\draw (1,0) .. controls (1.5,0.5) .. (1,1);
\draw (3.5,2) .. controls (3,2.5) .. (3.5,3);
\draw (7.25,-0.7) node{$y\mapsto h_4$};
\foreach \x in {6,8.5} \foreach \y in {0,1,2,3,4} \filldraw (\x,\y) circle (4pt);
\draw (6,0) -- (8.5,0);
\draw (6,1) -- (8.5,1);
\draw (6,2) -- (8.5,2);
\draw (6,3) .. controls (6.5,3.5) .. (6,4);
\draw (8.5,3) .. controls (8,3.5) .. (8.5,4);
\draw (12.25,-0.7) node{$x^2yx$};
\foreach \x in {11,13.5} \foreach \y in {0,1,2,3,4} \filldraw (\x,\y) circle (4pt);
\draw (11,4) -- (13.5,0);
\draw (11,0) .. controls (11.5,0.5) .. (11,1);
\draw (11,2) .. controls (11.5,2.5) .. (11,3);
\draw (13.5,2) .. controls (13,2.5) .. (13.5,3);
\draw (13.5,1) .. controls (12.5,2.5) .. (13.5,4);
\draw (12.25,0.5) circle (.5);
\draw (17.25,-0.7) node{$xyx^2$};
\foreach \x in {16,18.5} \foreach \y in {0,1,2,3,4} \filldraw (\x,\y) circle (4pt);
\draw (16,1) .. controls (16.5,.5) .. (16,0);
\draw (16,3) .. controls (16.5,2.5) .. (16,2);
\draw (18.5,0) .. controls (18,0.5) .. (18.5,1);
\draw (18.5,2) .. controls (18,2.5) .. (18.5,3);
\draw (16,4) -- (18.5,4);
\end{tikzpicture}
\end{center}
\caption{The identity $x^2yx\bumpeq xyx^2$ fails in $\mathcal{K}_5$}\label{fig:k5}
\end{figure}

At the moment, we possess no characterization of the identities of the monoid $\mathcal{K}_n$ for any $n>4$, neither we know whether there are other pairs of equationally equivalent Kauffman monoids besides $\mathcal{K}_3$ and $\mathcal{K}_4$.

\small

\end{document}